\documentclass[11pt,a4paper,reqno,twoside]{amsart}
\usepackage{amssymb,amstext,amsmath}
\usepackage{amsthm}
\usepackage{amsrefs}
\usepackage{mathtools}
\usepackage{epsfig}
\usepackage[utf8]{inputenc}
\usepackage{mdwlist}
\usepackage{xr-hyper}
\usepackage{cite}
\usepackage{thmtools}
\usepackage{thm-restate}
\usepackage{enumerate}
\usepackage{fancyhdr}
\usepackage{tikz}
\usepackage{tikz-3dplot}
\usetikzlibrary{calc}
\usepackage{emptypage}
\usepackage{xspace}
\usepackage{setspace}
\usepackage{hyperref}

\newcommand{\va}{{\boldsymbol a}}

\newcommand{\ve}{{\boldsymbol e}}
\newcommand{\vr}{{\boldsymbol r}}

\newcommand{\vt}{{\boldsymbol t}}
\newcommand{\vu}{{\boldsymbol u}}
\newcommand{\vv}{{\boldsymbol v}}
\newcommand{\vw}{{\boldsymbol w}}
\newcommand{\vx}{{\boldsymbol x}}
\newcommand{\vy}{{\boldsymbol y}}
\newcommand{\vz}{{\boldsymbol z}}

\newcommand{\vone}{{\boldsymbol 1}}

\newcommand{\vnull}{{\boldsymbol 0}}

\DeclarePairedDelimiter\ceil{\lceil}{\rceil}
\DeclarePairedDelimiter\floor{\lfloor}{\rfloor}

\newcommand{\R}{\mathbb{R}}

\newcommand{\N}{\mathbb{N}}
\newcommand{\Z}{\mathbb{Z}}


\newcommand{\symmetricconvexbodies}[1]{\mathcal{K}_{0}^{#1}}
\newcommand{\centroidconvexbodies}[1]{\mathcal{K}_{c}^{#1}}
\newcommand{\simplices}[1]{\mathcal{S}^{#1}}
\newcommand{\centroidsimplices}[1]{\mathcal{S}_{c}^{#1}}

\newcommand{\ehrhartsimplex}[1]{S_{#1}}






\newcommand{\successiveminimum}[2]{\lambda_{#1}\left(#2\right)} 

\newcommand{\latticeenumerator}[1]{\mathrm{G}\left(#1\right)}  
\newcommand{\interior}[1]{\mathrm{int}\left(#1\right)\,}  
\newcommand{\boundary}[1]{\mathrm{bd}\left(#1\right)\,} 
\newcommand{\conv}{\mathrm{conv}\,} 
\newcommand{\volume}[1]{\mathrm{vol}\left(#1\right)\,} 










\newcommand{\disuni}{\mathbin{\setbox0\hbox{$\bigcup$}\rlap{\copy0}\raise.3%
  \ht0\hbox to \wd0{\hfil$\cdot$\hfil}}}



\newcommand{\unitball}[1]{\bar{B}_{1}}


\newcommand{\centroid}[1]{\mathrm{c}(#1)}




\newcommand{\dd}{\mathrm{ d}} 














\newtheorem{lemma}{Lemma}[section]

\newtheorem{theorem}{Theorem}[section]
\newtheorem{proposition}{Proposition}[section]
\newtheorem{remark}{Remark}[section]

\newtheorem{conjecture}{Conjecture}
\newtheorem*{conjecture*}{Conjecture}

\newtheorem*{theorem*}{Theorem} 

\title{Lattice point inequalities for centered convex bodies}
\author{Sören Lennart Berg}
\author{Martin Henk}
\email{berg\{henk\}@math.tu-berlin.de}
\address{Technische Universität Berlin, Institut für Mathematik,
  Stra{\ss}e des 17. Juni 136, D-10623 Berlin}
\keywords{Ehrhart conjecture, lattice points, lattice
  polytopes, Minkowksi's successive minima, simplices}
\subjclass[2000]{11H06, 52C07} 

\numberwithin{equation}{section}
\begin{document}

\begin{abstract}
    We study upper bounds on the number of lattice points for  convex bodies having their
    centroid at the origin. For the family of simplices as well as in
    the planar case we obtain best possible results.  For arbitrary
    convex bodies  we provide an upper bound, which extends the
    centrally symmetric case and which, in particular, shows that  the
    centroid assumption is indeed much more restrictive than an
    assumption on the number of interior lattice points even for the
    class of lattice polytopes.   
\end{abstract}

\maketitle

\section{Introduction} \label{section:introduction}

A classical as well as fundamental problem in the Geometry of Numbers is finding bounds on the number of
lattice (integral) points $\latticeenumerator{K}=\#(K \cap \Z^d)$ of a \emph{convex body} $K$, i.e., a compact convex
set in $\R^d$, under certain kinds of conditions. Considering
the class $\symmetricconvexbodies{d}$ of all $o$-\emph{symmetric} convex bodies, 
i.e.,  all convex bodies satisfying $K=-K$,
this problem has been settled by Minkowski\cite[p. 79]{MR0249269}
under the condition that $\#(\interior{K} \cap \Z^d)=1$, i.e.,  
the origin is the only interior lattice point of
$K$.  He proved 
\begin{equation} \label{eq:minkowski3d}
     \latticeenumerator{K} \leq 3^d, 
\end{equation} 
and also gave a better bound of $2^{d+1}-1$  for strictly convex
$o$-symmetric  bodies. The equality
case in \eqref{eq:minkowski3d} has been characterized by Draisma, Nill
and McAllister\cite{MR3022127}. 
Furthermore, it was pointed out by Betke et al. \cite{MR1194034} that
Minkowski's bounds can easily be extended to arbitrary
$o$-symmetric convex bodies via the {\em first successive
minimum} $\lambda_1(K)$ of $K$, which, for latter purposes,  we define
here for any convex body with $\vnull\in \interior{K}$:
\begin{equation*} 
\lambda_1(K) = \min\{ \lambda \in\R_{>0}: \lambda K \cap \Z^d \neq
\{\vnull\}\}.
\end{equation*}
With this notation Betke et al. \cite{MR1194034} showed for
$\vnull$-symmetric convex bodies 
\begin{equation}
\latticeenumerator{K} \leq \left\lfloor \frac{2}{\lambda_1(K)}+1\right\rfloor^d,
\label{eq:minkowski3d-extended}
\end{equation}
where $\lfloor x\rfloor$ is the largest integer not larger than $x$.

In order to bound the number of  lattice points of more general convex
bodies one certainly needs restrictions either on the class of convex
bodies or on the position of the origin.  For instance, even for
simplices with one interior lattice point there is no general upper
bound as the  family of triangles with vertices $(-m,-1)$,$(m,-1)$
and $(0,1/(m-1))$, $m>1$, shows.
\begin{figure}[ht]
  \centering
    \begin{tikzpicture}
    \coordinate (Origin)   at (0,0);
    \coordinate (XAxisMin) at (-3.5,0);
    \coordinate (XAxisMax) at (3.5,0);
    \coordinate (YAxisMin) at (0,-2.5);
    \coordinate (YAxisMax) at (0,1.5);
    \draw [thin, gray,-latex] (XAxisMin) -- (XAxisMax);
    \draw [thin, gray,-latex] (YAxisMin) -- (YAxisMax);

    \clip (-3.2,-2.2) rectangle (3.2,1.2);
    \pgftransformcm{1}{0}{0}{1.0}{\pgfpoint{0cm}{0cm}}

    \coordinate (v_one) at (3,-1);
    \coordinate (v_two) at (-3,-1);
    \coordinate (v_three) at (0,1/2);
    \draw[thin, fill=gray, fill opacity=0.2] (v_one) -- (v_two) -- (v_three) -- (v_one);

    \foreach \x in {-7,-6,...,7}{
      \foreach \y in {-7,-6,...,7}{
        \node[draw,circle,inner sep=1pt,fill] at (\x,\y) {}; 
      }
    }

\end{tikzpicture}
  
  \caption{}
  \label{figure:convex_body_no_symmetry}
\end{figure}

If we are dealing, however, only with lattice polytopes, i.e., all
vertices are integral, then there are bounds on the number of lattice
points (actually, on the volume) in terms of the (non-zero) number of interior lattice
points, see., e.g.,     Hensley \cite{MR688412}, 
Lagarias \& Ziegler \cite{MR1138580}, Pikhurko \cite{MR1996360,
  Pikhurko2},  Averkov \cite{MR2967480}.

Even in the case of lattice simplices having only  one  interior lattice
point  such an upper  bound has to be double exponential in
$d$ as shown by Perles, Wills and Zaks\cite{MR651251}. They presented
a simplex $S_1^d \subset \R^d$ with a single interior lattice point and
\[
    \latticeenumerator{S_1^d} \geq \frac{2}{6 (d-2)!} 2^{2^{d-a}},
\]
where $a = 0.5856\ldots$ is a constant. 
Averkov, Kr\"umpelmann and Nill\cite{MR3318147} 
proved that $S_1^d$ has
maximum volume among all lattice simplices having one interior lattice point. It remains an
open question if a similar result is true considering the number of lattice points instead
of the volume as conjectured by Hensley \cite{MR688412}.

In this work we show that the number of lattice points of a convex body $K$ with 
$\interior{K} \cap \Z^d = \{\vnull\}$ having its centroid at the
origin is (still -- cf.~\eqref{eq:minkowski3d}) at most
exponential in the dimension $d$. The \emph{centroid} or \emph{barycenter} of a 
Lebesgue measurable set $A \subset \R^d$ having positive Lebesgue
measure is defined as
\begin{equation*}
    \centroid{A} = \frac{1}{\volume{A}} \int_A \vx \dd \vx,
\end{equation*}
where $\volume{A}$ denotes the \emph{volume}, i.e the the ($d$-dimensional) Lebesgue measure of $A$,
and $\dd \vx$ integration with respect to the Lebesgue measure. 
Letting  $\centroidconvexbodies{d}$ denote
the class of all convex bodies $K$ with $\centroid{K} = \vnull$ we prove the following
result.

\begin{proposition} \label{theorem:generalbound}
    Let $K \in \centroidconvexbodies{d}$.  
    Then
    \begin{equation} \label{eq:theoremgeneralbound}
        \latticeenumerator{K} < 2^d \left( \frac{2}{\lambda_1(K)} + 1 \right)^d.
    \end{equation}
\end{proposition}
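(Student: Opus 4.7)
The plan is to reduce to the $o$-symmetric case by passing to the centrally symmetric body $C := K \cap (-K)$ and running the Minkowski-type packing argument underlying \eqref{eq:minkowski3d-extended}, absorbing the asymmetry of $K$ into a factor of $2^d$ through a volume comparison. The body $C$ is $o$-symmetric and convex; since the centroid of $K$ lies in $\interior{K}$ we have $\vnull \in \interior{C}$, and the inclusion $C \subseteq K$ gives $\lambda_1(C) \geq \lambda_1(K)$.

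I would then pack scaled copies of $C$ at the lattice points of $K$: the translates $x + \tfrac{\lambda_1(K)}{2}\,C$, $x \in K \cap \Z^d$, have pairwise disjoint interiors, because an overlap would force the nonzero lattice vector $x - y$ into $\lambda_1(K)\,C$ (using $C - C = 2C$ for $o$-symmetric convex $C$), contradicting $\lambda_1(C) \geq \lambda_1(K)$. Using $C \subseteq K$ and $\vnull \in K$, convexity yields $K + \tfrac{\lambda_1(K)}{2}\,C \subseteq \bigl(1 + \tfrac{\lambda_1(K)}{2}\bigr)\,K$, and comparing the volume of the disjoint packing with this outer set gives
\[
\latticeenumerator{K} \;\leq\; \Bigl(\tfrac{2}{\lambda_1(K)} + 1\Bigr)^d \cdot \frac{\volume{K}}{\volume{K \cap (-K)}}.
\]

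The remaining ingredient is a volume inequality of the form $\volume{K} \leq 2^d\,\volume{K \cap (-K)}$ valid for every convex body with centroid at the origin. This is a centered volume estimate of Rogers--Shephard / Milman--Pajor type, obtainable for instance from the centered inequality $\volume{K \cap (-K)}\cdot\volume{K + K} \geq \volume{K}^2$ combined with the identity $\volume{K + K} = 2^d\volume{K}$. Plugging it into the previous display produces the claimed bound $\latticeenumerator{K} < 2^d\bigl(\tfrac{2}{\lambda_1(K)}+1\bigr)^d$, with the strict inequality coming from the slack in the packing (the translates do not tile $(1 + \lambda_1(K)/2)\,K$ exactly in general, and if $K$ is already $o$-symmetric then the sharper bound \eqref{eq:minkowski3d-extended} applies directly with lots of room).

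The main obstacle is the centered volume inequality itself. The elementary centroid fact $-K \subseteq dK$ only yields $K \cap (-K) \supseteq d^{-1}K$ and hence $\volume{K}/\volume{K \cap (-K)} \leq d^d$, which would degrade the final estimate by a factor of $(d/2)^d$ and be useless for $d \geq 3$. Securing the sharp constant $2^d$ in the volume ratio is therefore the delicate geometric step, and it is where the centroid assumption is genuinely exploited.
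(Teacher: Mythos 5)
This is essentially the paper's proof: pack the translates $x+\tfrac{\lambda_1(K)}{2}(K\cap -K)$, $x\in K\cap\Z^d$, inside $\bigl(1+\tfrac{\lambda_1(K)}{2}\bigr)K$ and control the resulting ratio $\volume{K}/\volume{K\cap -K}\le 2^d$ by the Milman--Pajor theorem for bodies with centroid at the origin, which is exactly the ingredient the paper invokes as \cite[Corollary 3]{MR1764107}. The one caveat is that your suggested self-contained derivation of that volume bound is circular --- since $K+K=2K$, the ``centered inequality'' $\volume{K\cap(-K)}\cdot\volume{K+K}\ge\volume{K}^2$ is literally a restatement of the desired conclusion $\volume{K\cap(-K)}\ge 2^{-d}\volume{K}$, while the genuine inequalities of this shape involve $K-K$ and would only yield the weaker constant $\binom{2d}{d}$ --- so this step should simply be quoted from Milman--Pajor, as you in any case acknowledge by calling it the delicate geometric step.
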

Particularly, if $\lambda_1(K) \geq 1$, i.e., $\interior{K} \cap \Z^d = \{ \vnull \}$, 
Proposition \ref{theorem:generalbound} yields $\latticeenumerator{K} <
6^d$.  

The bound in Proposition \ref{theorem:generalbound} is quite likely
asymptotically not sharp and
we believe that the worst case is attained by $d$-simplices. More
precisely, let $\ve_i\in\R^d$ be the $i$th unit vector, $\vone = (1, \ldots, 1)$ the all ones vector
 and let $\conv{A}$ be the \emph{convex hull} of a given
point set $A\subset\R^d$.  Let 
\begin{equation} 
\begin{split}
   \ehrhartsimplex{d} &:= (d+1)\conv\{\vnull,\ve_1, \ldots, \ve_d\}
   - \vone\\ 
&=\{\vx\in\R^d: x_i\geq -1, 1\leq i\leq d, x_1+x_2+\cdots + x_d\leq
1\}.
\label{eq:ehrhartsimplex}
\end{split}
\end{equation}
\begin{figure}[ht]
    \centering
        \begin{tikzpicture}
    \coordinate (Origin)   at (0,0);
    \coordinate (XAxisMin) at (-2.5,0);
    \coordinate (XAxisMax) at (3.5,0);
    \coordinate (YAxisMin) at (0,-2.5);
    \coordinate (YAxisMax) at (0,3.5);
    \draw [thin, gray,-latex] (XAxisMin) -- (XAxisMax);
    \draw [thin, gray,-latex] (YAxisMin) -- (YAxisMax);

    \clip (-2.2,-2.2) rectangle (3.2,3.2);
    \pgftransformcm{1}{0}{0}{1.0}{\pgfpoint{0cm}{0cm}}

    \coordinate (v_one) at (-1,-1);
    \coordinate (v_two) at (2,-1);
    \coordinate (v_three) at (-1,2);
    \draw [thin, fill=gray, fill opacity=0.2] (v_one) -- (v_two) -- (v_three) -- (v_one);

    \foreach \x in {-7,-6,...,7}{
      \foreach \y in {-7,-6,...,7}{
        \node[draw,circle,inner sep=1pt,fill] at (\x,\y) {};
      }
    }

\end{tikzpicture}
  
    \caption{$\ehrhartsimplex{2}$}
    \label{figure:ehrhartsimplex2d}
\end{figure}

For an integer $m\in\N$ it is easily verified that
$\latticeenumerator{m\,\ehrhartsimplex{d}}=\binom{d+m(d+1)}{d}$ and
 we  believe  that this type of bound is the correct upper bound in
Proposition 
\ref{eq:theoremgeneralbound}, i.e.,   
\begin{conjecture} \label{conjecture:generalbound}
     Let $K \in \centroidconvexbodies{d}$. Then
     \begin{equation*}
         \latticeenumerator{K} \leq \binom{d+\ceil*{ \lambda_1(K)^{-1} (d+1) } }{d}.
     \end{equation*}
 \end{conjecture}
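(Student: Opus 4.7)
Since Conjecture~\ref{conjecture:generalbound} remains open, I outline a plausible route rather than a complete proof. The plan is to reduce to the expected extremum, a dilate of $\ehrhartsimplex{d}$, in two stages: first from arbitrary centered convex bodies to centered lattice simplices, and then from such simplices to $\ehrhartsimplex{d}$ itself.

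Upper semicontinuity of $\latticeenumerator{\cdot}$ together with continuity of the centroid and of $\lambda_1$ in the Hausdorff metric reduces the claim to rational polytopes, and a slight rescaling further reduces to the case $\lambda_1(K) = (d+1)/\mu$ with $\mu := \ceil*{(d+1)/\lambda_1(K)}$, so that the target inequality reads $\latticeenumerator{K} \leq \binom{d+\mu}{d}$. The crucial step is then to show that, among centered rational polytopes with this fixed $\lambda_1$, the lattice-point maximum is attained by a simplex. The natural approach is variational: at a non-simplicial maximizer one would construct a one-parameter deformation of a single extreme point that preserves both the centroid and $\lambda_1$ constraints, and produce a contradiction by exhibiting a direction along which $\latticeenumerator{K}$ strictly increases. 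Unlike in the $\vnull$-symmetric setting, however, no standard symmetrization operation is compatible with the centroid condition, and this step will likely require a genuinely new idea.

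Once $K$ is reduced to a centered lattice simplex $S = \conv\{v_0, \ldots, v_d\}$ with $\sum_{i=0}^d v_i = \vnull$, Ehrhart theory becomes available: $\latticeenumerator{S}$ decomposes as a nonnegative combination of the $h^{*}$-coefficients of $S$. The classical Minkowski inclusion $-S \subseteq d\,S$, valid for any centered convex body, together with the constraint $\lambda_1(S) = (d+1)/\mu$, should bound each $h^{*}$-coefficient by that of the corresponding dilate of $\conv\{\vnull,\ve_1,\ldots,\ve_d\}$ (whose Ehrhart polynomial is $\binom{d+t}{d}$), via Stanley-type monotonicity under lattice inclusion. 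Summing recovers the desired bound, with $\ehrhartsimplex{d}$ emerging as the unique (up to unimodular change of coordinates) centered simplex saturating every coefficient.

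The main obstacle is unquestionably the reduction to simplices. Unlike the $\vnull$-symmetric case, where Betke et al.\ exploit a packing of translates of $\tfrac12 K$, the centroid is only a first-moment condition on $K$ and does not interact directly with the integer lattice. A successful proof will probably require a new convex-geometric inequality linking $\centroid{K}$, $\lambda_1(K)$, and $\latticeenumerator{K}$, perhaps arising from an Ehrhart--Macdonald reciprocity argument applied to the dilate $(d+1)K$, whose geometry is controlled by $-K \subseteq d\,K$. Without such a tool, even the simpler special case in which $K$ is assumed to be a polytope with $d+2$ vertices seems to lie outside the reach of the arguments underlying Proposition~\ref{theorem:generalbound}.
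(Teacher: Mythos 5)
This statement is Conjecture~\ref{conjecture:generalbound}, which the paper explicitly leaves open; there is no proof in the paper to compare against, and you are right to present a programme rather than a proof. Judged as a proof attempt, however, the proposal is incomplete at exactly the decisive point: the claim that among centered bodies with prescribed $\lambda_1$ the lattice-point count is maximized by a simplex is the entire content of the conjecture beyond what is known, and your ``variational'' deformation is not carried out. Beyond the obstruction you already flag (no symmetrization is compatible with the centroid constraint), note that a perturbation of one extreme point that preserves the centroid will generically change which lattice vector realizes $\lambda_1$, so the constraint surface is not smooth, and $\latticeenumerator{\cdot}$ is only upper semicontinuous, so a non-simplicial maximizer need not admit any direction of strict increase. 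The preliminary reductions (upper semicontinuity, rescaling so that $\lambda_1(K)^{-1}(d+1)$ is an integer) are sound but do not touch this difficulty.

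The second stage also does not match what is actually provable. For the simplex case, which is Theorem~\ref{theorem:simplexbound} of the paper, your route via $h^{*}$-vectors and Stanley monotonicity faces two obstacles: a simplex $S$ with $\centroid{S}=\vnull$ need not be a lattice (or even rational) simplex, so Ehrhart/$h^{*}$ machinery does not apply directly; and Stanley's monotonicity requires an honest inclusion of lattice polytopes, which the containment $-S\subseteq d\,S$ does not supply --- it gives no embedding of $S$ into a unimodular copy of a dilate of $\conv\{\vnull,\ve_1,\dots,\ve_d\}$. The paper's proof of the simplex case is instead elementary: Lemma~\ref{lemma:barycentricinequality} shows that any two distinct lattice points $\vu,\vw\in S$ satisfy $\beta_S(\vu)_k-\beta_S(\vw)_k\geq \lambda_1(S)/(d+1)$ for some barycentric coordinate $k$ (otherwise $\vw-\vu$ would lie in $\interior{\lambda_1 S}$), and the barycentric simplex $B$ is then covered by $\binom{d+n}{d}$ half-open boxes of side $1/n$ with $n=\ceil*{\lambda_1^{-1}(d+1)}$, so by pigeonhole no two lattice points of $S$ can share a box. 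If you want partial progress, that argument is the one to reproduce; the reduction from general centered bodies to simplices remains genuinely open.
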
 
Observe, compared to \eqref{eq:theoremgeneralbound} this bound is
asymptotically  smaller by a factor of $(\mathrm{e}/4)^d$.  
We will verify  this conjecture for arbitrary simplices. 
\begin{theorem} \label{theorem:simplexbound}
    Let $S \in \centroidconvexbodies{d}$ be a $d$-simplex. 
    Then
    \begin{equation*}
        \latticeenumerator{S} \leq \binom{d+\ceil*{ \lambda_1(S)^{-1}  (d+1)}}{d}.
    \end{equation*}
    Furthermore, for $\lambda_1(S)^{-1}\in\N$ equality holds if and
    only if  $S$ is unimodularly equivalent
    to $\lambda_1(S)^{-1}\,\ehrhartsimplex{d}$.
\end{theorem}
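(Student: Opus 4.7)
The plan is to pass to barycentric coordinates, where the centroid condition linearises the problem, and then construct an injection from $S\cap\Z^d$ into an explicit combinatorial set of cardinality $\binom{d+k}{d}$, where $k:=\ceil*{\lambda_1(S)^{-1}(d+1)}$.

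\textbf{Step 1 (Barycentric reduction).} Let $v_0,\dots,v_d$ be the vertices of $S$, so that $\sum_{i=0}^{d}v_i=\vnull$. The (affine) barycentric coordinate functions $\alpha_i\colon\R^d\to\R$, characterised by $x=\sum_i\alpha_i(x)v_i$ and $\sum_i\alpha_i\equiv 1$, satisfy $\alpha_i(\vnull)=1/(d+1)$. Their renormalisations $\tilde\alpha_i(x):=(d+1)\alpha_i(x)-1$ are therefore \emph{linear} functionals on $\R^d$ with $\sum_i\tilde\alpha_i\equiv 0$, and $S=\{x:\tilde\alpha_i(x)\ge -1\text{ for }0\le i\le d\}$. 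The map $\psi(x):=(\tilde\alpha_0(x),\dots,\tilde\alpha_d(x))$ is a linear isomorphism from $\R^d$ onto $H:=\{y\in\R^{d+1}:\sum_i y_i=0\}$, sending $S$ to $\psi(S)=\{y\in H:y_i\ge -1\}$ (a realisation of $\ehrhartsimplex{d}$ inside $H$) and $\Z^d$ to a full lattice $L\subset H$. Since $\psi$ is linear, $\lambda_1(\psi(S),L)=\lambda_1(S)=:\lambda$, and it suffices to prove $\#(\psi(S)\cap L)\le\binom{d+k}{d}$.

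\textbf{Step 2 (Floor injection).} Define $\phi\colon L\cap\psi(S)\to\Z_{\ge 0}^{d+1}$ componentwise by $\phi(y):=\lfloor(y+\vone)/\lambda\rfloor$. Nonnegativity follows from $y_i\ge -1$, and $\sum_i(y_i+1)=d+1$ yields $\sum_i\phi(y)_i\le(d+1)/\lambda\le k$. For distinct $y,y'\in L\cap\psi(S)$, the vector $y-y'$ is a nonzero element of $L$; by the definition of $\lambda$, some coordinate $i$ satisfies $y_i-y'_i\le -\lambda$, which forces $\phi(y')_i>\phi(y)_i$. Hence $\phi$ is an injection into $\{v\in\Z_{\ge 0}^{d+1}:\sum_i v_i\le k\}$.

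\textbf{Step 3 (Sharpening to the binomial bound).} The naive target has cardinality $\binom{d+k+1}{d+1}$, exceeding the desired $\binom{d+k}{d}$ by a factor $(d+k+1)/(d+1)$. The hyperplane constraint $\sum_i y_i=0$ forces $\sum_i\phi(y)_i\ge k-d-1$, confining the image to at most $d+2$ consecutive sum-levels within $[k-d-1,k]$. Exploiting this, together with the fact that the coordinate-wise differences of any two lattice points themselves sum to zero (so positive and negative slacks balance), I would refine $\phi$ either by projecting onto $d$ of the $d+1$ coordinates with a point-dependent choice of which coordinate to drop, or by an apportionment-style rule distributing the slack $k-\sum_i\phi(y)_i$ to a canonical set of coordinates, in such a way that the resulting map lands injectively in $\{u\in\Z_{\ge 0}^d:\sum_i u_i\le k\}$, a set of size $\binom{d+k}{d}$.

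\textbf{Step 4 (Equality case).} When $\lambda^{-1}=m\in\N$ (so $k=m(d+1)$), tracing equality through Steps~2--3 forces $(y+\vone)/\lambda=m(y+\vone)$ to be integral for every $y\in L\cap\psi(S)$. This pins $L$ down as $\lambda(\Z^{d+1}\cap H)$, and pulling back through $\psi^{-1}$ identifies $S$ with $\lambda^{-1}\ehrhartsimplex{d}$ via an integral linear isomorphism, exhibiting the required unimodular equivalence.

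The main obstacle is Step~3: a plain floor map yields only the slack bound $\binom{d+k+1}{d+1}$, so the hyperplane constraint $\sum_i y_i=0$ must be used decisively to reduce by one dimension. The cleanest approach appears to be an apportionment-style adjustment sending all image vectors to the common sum-level $k$, while the $\lambda$-separation of lattice points preserves injectivity; making this precise is where the bulk of the technical work lies.
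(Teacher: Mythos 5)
Your Steps 1 and 2 are correct and are essentially the paper's setup in a cleaner linear form: your separation statement ``some coordinate of $y-y'$ is $\le-\lambda$'' is exactly the paper's Lemma \ref{lemma:barycentricinequality} on barycentric coordinates, and the floor map is its covering by boxes. But Step 3, which you yourself flag as ``where the bulk of the technical work lies,'' is a genuine gap: you never produce the injection into a set of size $\binom{d+k}{d}$. Of the two routes you sketch, a \emph{point-dependent} choice of which coordinate to drop does not obviously preserve injectivity or keep the target set of the right size, and the ``apportionment'' rule is not specified. The fix is in fact short, and it is precisely the paper's device (covering $B$ by cylinders over half-open cubes in only $d$ of the $d+1$ coordinates): drop a \emph{fixed} coordinate, i.e.\ set $\phi'(y)=\bigl(\lfloor(y_1+1)/\lambda\rfloor,\dots,\lfloor(y_d+1)/\lambda\rfloor\bigr)$. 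Then $\sum_{i=1}^d\phi'(y)_i\le\sum_{i=1}^d(y_i+1)/\lambda=(d+1-(y_{d+1}+1))/\lambda\le(d+1)/\lambda\le k$, so the target is $\{u\in\Z_{\ge0}^d:\sum u_i\le k\}$, of size exactly $\binom{d+k}{d}$; and if $\phi'(y)=\phi'(y')$ for $y\ne y'$ then $|y_i-y'_i|<\lambda$ for $i\le d$, so applying your separation statement to both $y-y'$ and $y'-y$ forces $y_{d+1}-y'_{d+1}\le-\lambda$ and $y_{d+1}-y'_{d+1}\ge\lambda$ simultaneously, a contradiction. Without this (or an equivalent) argument the proposal only proves the weaker bound $\binom{d+k+1}{d+1}$.

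Step 4 also has real gaps. First, equality of cardinalities does not immediately force $(y+\vone)/\lambda$ to be integral for every lattice point $y$: surjectivity of a floor map does not place the points at the corners of their cells, and the paper needs a nontrivial extremal argument (choosing a counterexample with a maximal offending coordinate and exhibiting two lattice points violating the separation lemma) to prove the analogous claim $\beta_S(S\cap\Z^d)=R_{\lambda_1}$. Second, even once the lattice points \emph{inside} $\psi(S)$ are known to lie on the grid $\lambda(\Z^{d+1}\cap H)$, this only shows that grid is a subset of $L$ restricted to $\psi(S)$; concluding $L=\lambda(\Z^{d+1}\cap H)$ (equivalently, that the matrix of edge vectors $\frac{1}{n}(\vv_i-\vv_{d+1})$ is unimodular rather than of determinant $\ge 2$) requires an additional input. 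The paper supplies it by invoking Pikhurko's volume bound $\volume{\lambda_1 S}\le\volume{\ehrhartsimplex{d}}$ for lattice simplices with centroid at the origin. Your sketch does not address either point.
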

Here  we say that
two sets $A, B \subset \R^d$ are \emph{unimodularly equivalent} if 
$A = UB + \vz$ for a unimodular matrix $U \in \Z^{d \times d}$, i.e.,
$|\det U|=1$, and $\vz \in \Z^d$.

It is noteworthy that Conjecture \ref{conjecture:generalbound}
is strongly related to 
a well-known conjecture by Ehrhart.

\begin{conjecture*}[Ehrhart, \cite{MR0163219}]
    Let $K \subset \R^d$ be a convex body with $\centroid{K} =
    \vnull$, and $\interior{K}\cap\Z^d=\{\vnull\}$. Then 
\begin{equation*}
\volume{K}\leq \volume{S_d}=\frac{(d+1)^d}{d!}.
\end{equation*}
\end{conjecture*}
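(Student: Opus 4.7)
Ehrhart's conjecture has been open for over six decades, so what follows is a plan of attack rather than a route I expect to complete. The natural first tool is the Minkowski-Radon inequality for centered bodies: every $K \in \centroidconvexbodies{d}$ satisfies $-K \subseteq d K$, i.e.\ $-\tfrac{1}{d} K \subseteq K$. Together with $\tfrac{1}{d} K \subseteq K$ (convexity plus $\vnull \in K$) this gives $\tfrac{1}{d} K \subseteq K \cap (-K) =: K^{\circ}$, an $o$-symmetric convex body with $\interior{K^\circ} \subseteq \interior{K}$, hence $\interior{K^\circ} \cap \Z^d = \{\vnull\}$. Minkowski's convex-body theorem in its volume form gives $\volume{K^\circ} \leq 2^d$, and consequently
\[
  \volume{K} \leq d^d\, \volume{K^\circ} \leq (2d)^d.
\]

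This first bound is exponentially weaker than the conjectured $(d+1)^d/d! \sim e^{d+1}/\sqrt{2\pi d}$, off by a factor of roughly $(2d/e)^d$, so the real task is to close this gap. My plan would follow the blueprint that succeeds for simplices in Theorem \ref{theorem:simplexbound}: show that any extremizer must be a simplex, and in fact unimodularly equivalent to $\ehrhartsimplex{d}$. Two conceivable routes are (a) a shadow-system or symmetrization argument that monotonically increases $\volume{K}$ while preserving both $\centroid{K} = \vnull$ and $\interior{K} \cap \Z^d = \{\vnull\}$, driving $K$ toward a simplex; and (b) an induction on dimension via hyperplane slices, bounding $\volume{K}$ by integrating $(d-1)$-dimensional volumes of parallel slices and invoking the conjecture for the slices.

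The main obstacle with either route is that the centroid constraint is affine and nonlinear, and it is not preserved by the standard tools of convex geometry. Steiner symmetrization, Schwarz rounding, and the usual shadow systems all destroy $\centroid{K} = \vnull$; and slices of a centered body are generally not themselves centered in their hyperplane, so the obvious inductive hypothesis fails. A successful proof will likely require either a new centroid-preserving symmetrization compatible with $\Z^d$, or a novel functional inequality linking $\volume{K}$, $\centroid{K}$, and $\lambda_1(K)$ that goes meaningfully beyond Minkowski-Radon; identifying such a tool is, in my view, the crux.
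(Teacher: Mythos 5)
The statement you were asked to prove is Ehrhart's conjecture, which is an open problem; the paper itself offers no proof of it. It only records the known cases (Ehrhart's own proofs in the plane and for simplices in every dimension) and shows, via a limiting argument, that the paper's Conjecture 1 would imply it. Your proposal correctly recognizes this status and honestly declines to claim a proof, so there is no false step to flag — but there is also, unavoidably, no proof to compare against.

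Two remarks on the content you do supply. Your partial bound is correct: Minkowski--Radon gives $-K\subseteq dK$ for a body with centroid at the origin, hence $\tfrac1d K\subseteq K\cap(-K)$, and Minkowski's first theorem applied to the $o$-symmetric body $K\cap(-K)$ (whose interior meets $\Z^d$ only in $\vnull$) yields $\volume{K}\leq (2d)^d$. However, this is weaker than what the paper's own toolkit already gives: the Milman--Pajor inequality (Theorem 2.1) states $\volume{K\cap -K}\geq 2^{-d}\volume{K}$, so the same Minkowski step yields $\volume{K}\leq 2^{d}\volume{K\cap -K}\leq 4^{d}$, which is off from the conjectured $(d+1)^d/d!\sim \mathrm{e}^{d+1}/\sqrt{2\pi d}$ only by a factor of roughly $(4/\mathrm{e})^d$ rather than $(2d/\mathrm{e})^d$. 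Your assessment of the obstacles to routes (a) and (b) — that the standard symmetrizations destroy the centroid condition and that slices of a centered body are not centered — is accurate and is precisely why the problem remains open; but as written these routes are programmatic sketches, not arguments, and cannot be credited as a proof of the statement.
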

In fact, due to the Jordan measurability  of convex bodies (cf., e.g.,
\cite[Theorem 7.4]{MR2335496}),
Conjecture \ref{conjecture:generalbound} implies Ehrhart's conjecture:

\begin{align*}
    \volume{K} 
    & = \lim_{m\to\infty } m^{-d} \#\left(K \cap \frac{1}{m} \Z^d \right)
    = \lim_{m\to\infty}  m^{-d}\#\left(m\,K \cap \Z^d \right) \\
    & \leq \lim_{m\to\infty} m^{-d} \binom{d+\ceil*{ \successiveminimum{1}{m\,K}^{-1}  (d+1) } }{d} \\
    & = \lim_{m\to\infty} m^{-d} \binom{d+\ceil*{ m \successiveminimum{1}{K}^{-1}  (d+1) } }{d} \\
    & \leq \lim_{m\to\infty} m^{-d} \binom{d+{ m \successiveminimum{1}{K}^{-1}  (d+1) +1 } }{d}
    = \successiveminimum{1}{K}^{-d} \frac{(d+1)^d}{d!}.
\end{align*}
Hence, if $\interior{K}\cap\Z^d=\{\vnull\}$ we have
$\successiveminimum{1}{K}\ge 1$ and Conjecture
\ref{conjecture:generalbound} gives Ehrhart's conjecture. 
For recent progress regarding the latter we refer to \cite{2012arXiv1204.1308B}
and \cite{MR3276123}. Ehrhart proved his conjecture in the 
plane \cite{MR0066430, MR0163219} and for 
simplices in any dimension\cite{MR518864}. Observe, that Theorem
\ref{theorem:simplexbound} also implies Ehrhart's result for simplices
(cf.~\cite[Proposition 2.15]{HHZ}).   
The problem is also briefly discussed
in \cite[p. 147]{MR1316393}. It is worth mentioning that $\ehrhartsimplex{d}$ has maximal
volume among all lattice simplices having the centroid at the origin, see Lemma 5 in \cite{MR1996360}.
Moreover, every such lattice simplex has also constant Mahler volume 
\cite[Theorem 2.4, Proposition 6.1]{MR3318147}.

Finally, by using classical results of planar geometry,  our last
result verifies Conjecture \ref{conjecture:generalbound}  for planar convex bodies
whose only lattice point is the origin.
\begin{theorem} \label{theorem:planarcase}
    Let $K \in \centroidconvexbodies{2}$ with $\latticeenumerator{\interior{K}}=1$. Then
     \begin{equation*}
         \latticeenumerator{K} \leq 10.
     \end{equation*}
     Furthermore, equality holds if and only if $K$ is unimodularly equivalent to $\ehrhartsimplex{2}$.
\end{theorem}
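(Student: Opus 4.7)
The plan is to combine the planar Ehrhart bound $\volume{K}\le 9/2$ (recalled in the introduction) with Pick's theorem applied to the lattice hull $P:=\conv(K\cap\Z^{2})\subseteq K$, dispatching an unfavourable position of the origin via Gr\"unbaum's inequality. Write $N:=\latticeenumerator{K}$, so that $\latticeenumerator{P}=N$ as well, and observe that $\interior{P}\cap\Z^{2}\subseteq\interior{K}\cap\Z^{2}=\{\vnull\}$.

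First I would discard the low-dimensional case $\dim P\le 1$. After a unimodular change of coordinates aligning $P$ with the $x$-axis, the presence of any lattice point $(k,0)\in K$ with $|k|\ge 2$ together with $\vnull\in\interior{K}$ forces, by convexity, a nonzero lattice point to lie in $\interior{K}$, which is impossible. Hence $P\subseteq[-1,1]\times\{0\}$ and $N\le 3$. In the remaining case $\dim P=2$, Pick's formula reads $\volume{P}=I_{P}+B_{P}/2-1$ with $I_{P}\in\{0,1\}$.

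In the main subcase $\vnull\in\interior{P}$ we have $I_{P}=1$ and $B_{P}=N-1$, so $\volume{P}=(N-1)/2$. Since $P\subseteq K$, the planar Ehrhart bound yields $(N-1)/2\le\volume{K}\le 9/2$, i.e.\ $N\le 10$. In the remaining subcase $\vnull\in\partial P$ we have $I_{P}=0$, $B_{P}=N$ and $\volume{P}=N/2-1$. Here I would invoke Gr\"unbaum's inequality: any closed halfplane whose boundary passes through the centroid of a planar convex body captures at least $(2/3)^{2}=4/9$ of its volume. A supporting line of $P$ through $\vnull$ confines $P$ to the complementary halfplane, so
\begin{equation*}
    \volume{P}\le \tfrac{5}{9}\,\volume{K}\le \tfrac{5}{9}\cdot\tfrac{9}{2}=\tfrac{5}{2},
\end{equation*}
which forces $N\le 7<10$.

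For the equality claim, $N=10$ puts us in the main subcase with $\volume{P}=\volume{K}=9/2$; since $P\subseteq K$ are convex of equal volume, $P=K$, so $K$ is a lattice polygon attaining the planar Ehrhart bound, and the equality case of Ehrhart's planar theorem identifies $K$, up to unimodular equivalence, with $\ehrhartsimplex{2}$. The step I expect to require the most care is this last invocation of the equality case; should a clean reference be inconvenient, one can argue directly from the Pick data $I_{P}=1$, $B_{P}=9$, $\volume{P}=9/2$, $\centroid{P}=\vnull$, together with the fact recalled in the introduction that $\ehrhartsimplex{2}$ has maximal volume among lattice simplices whose centroid is at the origin, to classify the extremal polygon $P$ by hand.
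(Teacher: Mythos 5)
Your argument for the bound $\latticeenumerator{K}\le 10$ is essentially the paper's: set $P=\conv(K\cap\Z^2)$, split according to whether $\vnull$ lies in $\interior{P}$ or not, and combine Pick's formula with Ehrhart's planar bound $\volume{K}\le 9/2$ in the first case, and with Gr\"unbaum's inequality (giving $\volume{P}\le\frac{5}{9}\volume{K}\le\frac{5}{2}$ and hence $\latticeenumerator{K}\le 7$) in the second. Your explicit treatment of the degenerate case $\dim P\le 1$ is an extra bit of care that the paper glosses over, and it is correct: a lattice point $(k,0)\in K$ with $|k|\ge 2$ would place $(1,0)$ or $(-1,0)$ on the half-open segment joining the interior point $\vnull$ to it, hence in $\interior{K}$.

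The equality case is where you part ways with the paper, and where your write-up has a genuine gap. The paper closes it with Scott's theorem: once $\latticeenumerator{K}=10$ forces $\volume{P}=\volume{K}=9/2$ and hence $P=K$, the lattice polygon $P$ satisfies $\latticeenumerator{\interior{P}}=1$ and $\latticeenumerator{\boundary{P}}=9$, so $\latticeenumerator{\boundary{P}}-2\latticeenumerator{\interior{P}}=7$ attains equality in Scott's inequality, whose equality case says precisely that $P$ is unimodularly equivalent to $\ehrhartsimplex{2}$. Your primary route instead appeals to ``the equality case of Ehrhart's planar theorem,'' a classification you neither prove nor pin down with a reference; and your fallback does not work as stated, because the maximality of $\volume{\ehrhartsimplex{2}}$ among lattice \emph{simplices} with centroid at the origin says nothing a priori about $P$, which at that stage is only known to be a lattice polygon with one interior and nine boundary lattice points and centroid $\vnull$ (you would still have to rule out quadrilaterals, pentagons, and so on by hand). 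Replacing this step by the appeal to Scott's theorem repairs the proof completely.
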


This paper is organised as follows. In Section \ref{section:generalbound} we will discuss 
the proof of Proposition \ref{theorem:generalbound} in detail. 
The proof of Theorem \ref{theorem:simplexbound} is presented in Section \ref{section:simplexbound}.
It relies mainly on the fact that a point in a simplex can be described in a unique way by its barycentric coordinates. 
In Section \ref{section:planarcase}
we will provide the proof of Theorem \ref{theorem:planarcase}, which is based on
results due to Ehrhart, Gr\"unbaum, Pick and Scott.

\section{The Proof of Proposition \ref{theorem:generalbound}} \label{section:generalbound}

Considering a convex body $K$ it appears plausible that its centroid $\centroid{K}$
is located deep inside $K$, i.e., not too close to the 
boundary of $K$. In conclusion one would expect that the volume of the intersection
of $K$ with its  reflection  
at  $\centroid{K}$  cannot be too small with respect
to the volume of $K$. Indeed Milman and Pajor proved the following result 
on which our proof relies heavily. 

\begin{theorem}[\hspace{1sp}\protect{\cite[Corollary 3]{MR1764107}}] \label{thm:MilmanPajor}
    Let $K \in \centroidconvexbodies{d}$ with $\centroid{K} = \vnull$. Then
    \begin{equation*}
        \volume{K \cap -K} \geq 2^{-d} \volume{K}.
    \end{equation*}
\end{theorem}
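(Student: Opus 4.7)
The plan is to interpret $\volume{K\cap -K}$ as the value at the origin of the self-convolution $f:=\mathbf{1}_K\ast\mathbf{1}_K$ and to leverage the centroid hypothesis $\centroid K=\vnull$ at two points simultaneously: as the barycenter of $f$ viewed as a density, and as the centroid of the enveloping set $2K$ containing $\operatorname{supp}f$.

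Write $V:=\volume K$ and set
\[
  f(x):=(\mathbf{1}_K\ast\mathbf{1}_K)(x)
  =\int_{\R^d}\mathbf{1}_K(y)\,\mathbf{1}_K(x-y)\,\dd y
  =\volume{K\cap(-K+x)},\quad x\in\R^d,
\]
so that $f(\vnull)=\volume{K\cap -K}$ is what we wish to bound. Four structural facts require only routine bookkeeping: $f$ is log-concave on $\R^d$ (the convolution of log-concave indicators is log-concave by Prékopa--Leindler); $\int_{\R^d}f=V^2$ by Fubini; $\operatorname{supp}f=K+K=2K$ by the convexity of $K$, giving $\volume{\operatorname{supp}f}=2^dV$; and a short Fubini calculation using $\centroid K=\vnull$ shows that the probability density $f/V^2$ has barycenter $2\centroid K=\vnull$, while $2K$ itself has centroid $2\centroid K=\vnull$.

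The theorem therefore reduces to the following functional inequality, applied to $h=f$ and $C=2K$: \emph{if $h\colon\R^d\to\R_{\ge 0}$ is log-concave with its density barycenter at $\vnull$ and its support contained in a convex set $C$ whose centroid is also at $\vnull$, then $h(\vnull)\ge\int h/\volume C$.} For our $h$ and $C$ this yields $f(\vnull)\ge V^2/(2^dV)=V/2^d$, finishing the proof.

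The principal obstacle is this functional inequality. A direct application of Jensen's inequality fails: the two natural bounds are $\log h(\vnull)\ge \frac{1}{\volume C}\int_C\log h$ (from concavity of $\log h$ integrated against the uniform measure on $C$, which has centroid $\vnull$) and $\frac{1}{\volume C}\int_C\log h\le\log(\int h/\volume C)$ (from concavity of $\log$), and together these leave the comparison between $h(\vnull)$ and $\int h/\volume C$ undetermined. The standard way forward is via the Kannan--Lovász--Simonovits (equivalently Payne--Weinberger) needle-localisation technique, which reduces the $d$-dimensional statement to its one-dimensional avatar on a symmetric interval $[-a,a]$: every log-concave $h\colon[-a,a]\to\R_{\ge 0}$ with $\int_{-a}^a t\,h(t)\,\dd t=0$ satisfies $h(0)\ge(2a)^{-1}\int_{-a}^a h$. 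This one-dimensional step, together with the localisation machinery, constitutes the genuine geometric content of the Milman--Pajor bound; the convolution set-up serves only to package the problem into a form where these functional tools can be brought to bear.
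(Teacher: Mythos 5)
First, note that the paper does not prove this statement at all: it is quoted verbatim from Milman--Pajor \cite{MR1764107}*{Corollary 3} and used as a black box, so the only fair comparison is with the original source. Your set-up is correct and is essentially the frame in which the original result lives: $f=\mathbf{1}_K\ast\mathbf{1}_K$ satisfies $f(\vnull)=\volume{K\cap -K}$, $\int f=\volume{K}^2$, $\mathrm{supp}\,f=2K$ with $\volume{2K}=2^d\volume{K}$, and the barycenter of $f$ is $2\centroid{K}=\vnull$; all of these computations check out, and the reduction to ``$f(\vnull)\ge \int f/\volume{\mathrm{supp}\,f}$'' is exactly the right target.

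The gap is that this target inequality --- which you correctly identify as carrying the entire content of the theorem --- is never proved. You replace it by an appeal to KLS needle localisation, but that appeal does not go through as described: (i) the conclusion $h(\vnull)\ge \int h/\volume{C}$ is a pointwise evaluation, not an integral functional, so it is not of the form the localisation lemma handles; (ii) the hypothesis ``the centroid of $C\supseteq\mathrm{supp}\,h$ is $\vnull$'' is a constraint on the support, not a linear integral constraint $\int hg=0$, and is not preserved by bisection; (iii) even granting a localisation, the needles would be arbitrary segments carrying log-concave weights, not symmetric intervals $[-a,a]$ with the barycenter at the midpoint, so the one-dimensional statement you reduce to is not the one localisation would produce --- and that one-dimensional statement is itself asserted without proof. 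Moreover, by invoking only Pr\'ekopa--Leindler you discard the much stronger structure that is actually the lever in Milman--Pajor's argument: by Brunn--Minkowski applied to $\lambda\bigl(K\cap(x-K)\bigr)+(1-\lambda)\bigl(K\cap(y-K)\bigr)\subseteq K\cap\bigl(\lambda x+(1-\lambda)y-K\bigr)$, the function $f^{1/d}$ is \emph{concave} on $2K$, and it is for such $1/d$-concave functions with barycenter at the origin that the inequality $f(\vnull)\,\volume{\mathrm{supp}\,f}\ge\int f$ is established in \cite{MR1764107}. As it stands, your argument proves nothing beyond the easy reduction; if you want a self-contained proof you must either prove the functional inequality for $1/d$-concave densities or cite it precisely, rather than gesture at localisation.
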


\begin{proof}[Proof of Proposition \ref{theorem:generalbound}]

    Let $K \in \centroidconvexbodies{d}$ and for short we set
    $\lambda_1 = \successiveminimum{1}{K}$. First we observe that 
    $\frac{\lambda_1}{2}(K \cap -K)$ is a packing set with respect to
  the integer lattice, i.e.,  we have
    \begin{equation*}
    		\interior{\vu+\frac{\lambda_1}{2}(K \cap -K)} 
    		    \cap \interior{\vv+\frac{\lambda_1}{2}(K \cap -K)} 
    		    = \emptyset,
    \end{equation*}
     for $\vu, \vv \in \Z^d$, $\vu\ne\vv$; otherwise, by the
     $o$-symmetry of $K\cap -K$ we get 
\begin{equation*}
\begin{split}
 \vu-\vv&\in \frac{\lambda_1}{2} \interior{K\cap-K}+\frac{\lambda_1}{2}
     \interior{K\cap-K} \\ &=\lambda_1\, \interior{K\cap-K} 
  \subseteq \lambda_1\interior{K},
\end{split}
\end{equation*}
contradicting the minimality of $\lambda_1$. We also certainly have
(cf.~Figure \ref{figure:covering_volume_bound_tikz})

    \begin{equation} \label{eq:translatetrivial}
    	(\Z^d \cap K) + \tfrac{\lambda_1}{2}(K \cap -K) \subseteq K +
        \tfrac{\lambda_1}{2}(K \cap -K)\subseteq
        \left(1+\frac{\lambda_1}{2}\right)K. 
    \end{equation}
    Hence,  in view of Theorem \ref{thm:MilmanPajor} we find

    \begin{equation} \label{eq:latticeboundvolume}
\begin{split}
        \latticeenumerator{K} 
            \leq \frac{ \volume{\left(1+\frac{\lambda_1}{2}\right)K}
            }{ \volume{\tfrac{\lambda_1}{2}(K \cap -K)}
            }&=\left(\frac{2}{\lambda_1}+1\right)^d\frac{\volume{K}}{\volume{K\cap
             {-K}}}\\ &\leq 2^d\left(\frac{2}{\lambda_1}+1\right)^d.
\end{split}
    \end{equation}
If $K$ is $o$-symmetric we know by \eqref{eq:minkowski3d-extended}  that the last inequality
is strict, and if $K$ is not symmetric with respect to $\vnull$ we
have a strict inclusion in the last inclusion of \eqref{eq:translatetrivial}.
\end{proof}

\begin{figure}[ht]
    \begin{tikzpicture}
    \coordinate (Origin)   at (0,0);
    \coordinate (XAxisMin) at (-2.5,0);
    \coordinate (XAxisMax) at (3.5,0);
    \coordinate (YAxisMin) at (0,-2.5);
    \coordinate (YAxisMax) at (0,3.5);
    \draw [thin, gray,-latex] (XAxisMin) -- (XAxisMax);
    \draw [thin, gray,-latex] (YAxisMin) -- (YAxisMax);

    \clip (-2.2,-2.2) rectangle (3.2,3.2);
    \pgftransformcm{1}{0}{0}{1.0}{\pgfpoint{0cm}{0cm}}
    
    \coordinate (v_one) at (-1,-1);
    \coordinate (v_two) at (2,-1);
    \coordinate (v_three) at (-1,2);
    
    \coordinate (w_one) at (-3/2,-1);
    \coordinate (w_two) at (-1,-3/2);
    \coordinate (w_three) at (5/2,-3/2);
    \coordinate (w_four) at (5/2,-1);
    \coordinate (w_five) at (-1,5/2);
    \coordinate (w_six) at (-3/2,5/2);
    
    \foreach \v in {(-1,-1),(-1,0),(-1,1),(-1,2),(0,-1),(0,0),(0,1),(1,-1),(1,0),(2,-1)}
    {
        \draw[dashed, fill=gray, fill opacity=0.1] [shift={\v}] (-1/2,1/2) -- (0,1/2) -- (1/2,0) -- (1/2,-1/2) -- (0,-1/2) -- (-1/2,0) -- (-1/2,1/2);
    }
    
    \draw[thin, dashed, fill=gray, fill opacity=0.0] (w_one) -- (w_two) -- (w_three) -- 
                (w_four) -- (w_five) -- (w_six) -- (w_one);
    
    \draw [thin, fill=gray, fill opacity=0.0] (v_one) -- (v_two) -- (v_three) -- (v_one);

    \foreach \x in {-7,-6,...,7}{
      \foreach \y in {-7,-6,...,7}{
        \node[draw,circle,inner sep=1pt,fill] at (\x,\y) {};
      }
    }

\end{tikzpicture}
    \caption{}
    \label{figure:covering_volume_bound_tikz}
\end{figure}

\begin{remark}
    In the case that the convex body $K$ is centrally symmetric the
    proof above gives essentially  the  result \eqref{eq:minkowski3d-extended}
    since then $K \cap -K = K$  in \eqref{eq:latticeboundvolume}. In
    particular, it also recovers Minkowski's result
    \eqref{eq:minkowski3d} which he proved by a simple residue class
    argument.  Actually, by such an argument Minkowski also showed  
    that in the case of strictly $o$-symmetric convex
    bodies $K$ with $\lambda_1(K)=1$ the stronger bound holds true 
\begin{equation*}
                 \latticeenumerator{K}\leq 2^{d+1}-1.
\end{equation*}
In fact, this bound is even true without symmetry solely under the
assumption $\interior{K}\cap\Z^d=\{\vnull\}$. 
\end{remark}

Proposition \ref{theorem:generalbound} is certainly not sharp, since  for non $o$-symmetric
convex bodies the left-hand side in \eqref{eq:translatetrivial} is a proper subset of
the right-hand side (see also in Figure \ref{figure:covering_volume_bound_tikz}). 
In addition,  it is  unknown whether Corollary \ref{thm:MilmanPajor} 
is sharp itself. In fact, it seems rather plausible that 
the volume ratio $\volume{K \cap -K}/\volume{K}$
is minimal if $K$ is a simplex for which it is exponentially greater
than $2^{-d}$, namely (roughly) of order  $(2/\mathrm{e})^d$.

For $d=2$ this problem has been solved by Stewart, c.f.~\cite{MR0097771}. 
For recent results regarding this problem see \cite{MR3300714}.

\section{Proof of Theorem \ref{theorem:simplexbound}} \label{section:simplexbound}

Let  $S=\conv\{\vv_1,\dots,\vv_{d+1}\}\subset \R^d$ be a
$d$-dimensional simplex, and for $\vx \in \R^d$ let $\beta_S(\vx)\in\R^{d+1}$ its
unique {\em  affine (barycentric)} coordinates with respect to the vertices of
$\vv_1,\dots,\vv_{d+1}$ of $S$, i.e., 
\begin{equation*}
   \vx =  \sum_{i=1}^{d+1} \beta_S(\vx)_i \vv_i \quad \text{ and  } \quad
     \sum_{i=1}^{d+1} \beta_S(\vx)_i = 1.
\end{equation*}
Moreover, we have $\vx\in S$ if and only if
$\beta_S(\vx)\in\R^{d+1}_{\geq 0}$, i.e., all entries are non-negative,
as well as 
$\vx \in \interior{S}$ if and only if $\beta_S(\vx)\in\R^{d+1}_{>
  0}$. For $\vx,\vy$ and $\lambda,\mu\in\R$ we have 
\begin{equation*}
\beta_S(\mu\,\vx+\lambda\,\vy)=\mu\beta_S(\vx)+\lambda\beta_S(\vy)+(1-(\mu+\lambda))\,\beta_S(\vnull). 
\end{equation*}

We denote by $\simplices{d}$ the class of all $d$-dimensional simplices and by
$\centroidsimplices{d}$ the set of all $S \in \simplices{d}$ with
$\centroid{S}=\vnull$.  Observe, for $S\in\centroidsimplices{d}$ we
have 
\begin{equation*}
  \beta_S(\vnull)_i=\frac{1}{d+1}, \,1\leq i\leq d+1.
\end{equation*}

\begin{lemma} \label{lemma:barycentricinequality}
    Let $S \in \centroidsimplices{d}$,   and let $\vu,\vw \in S \cap
    \Z^d$, $\vu\ne\vw$.  Then there exists an index
    $k\in\{1,\dots,n+1\}$ with 
\begin{equation*} 
 \beta_S(\vu)_k-\beta_S(\vw)_k\geq \lambda_1(S)\,\frac{1}{d+1}.
\end{equation*}
\end{lemma}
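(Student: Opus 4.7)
The plan is to reduce the two-point inequality to a single-vector bound on the non-zero lattice vector $\vz := \vw - \vu$. First, I would apply the affine-linearity of $\beta_S$ recorded just before the lemma (with $\mu=1$, $\lambda=-1$, $\vx=\vw$, $\vy=\vu$) to get $\beta_S(\vz) = \beta_S(\vw) - \beta_S(\vu) + \beta_S(\vnull)$, i.e.\
\[
  \beta_S(\vu) - \beta_S(\vw) \;=\; \beta_S(\vnull) - \beta_S(\vz).
\]
Since $S$ is centered, $\beta_S(\vnull)_k = 1/(d+1)$ for each $k$, so the lemma becomes equivalent to exhibiting an index $k$ with
\[
  \beta_S(\vz)_k \;\leq\; \frac{1-\lambda_1(S)}{d+1}.
\]

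To produce such an index I would invoke the definition of $\lambda_1(S)$ directly. Since $\vz \in \Z^d\setminus\{\vnull\}$, it cannot lie in $\interior{\lambda_1(S)\,S}$---otherwise one could contract $S$ slightly and still capture $\vz$, contradicting the minimality of $\lambda_1(S)$. Equivalently, the point $\vz/\lambda_1(S)$ lies outside $\interior{S}$, so by the stated characterization of $\interior{S}$ in terms of strictly positive barycentric coordinates, there is some $k$ with $\beta_S(\vz/\lambda_1(S))_k \leq 0$.

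The final step is to translate this into a bound on $\beta_S(\vz)_k$ itself. Using the one-variable special case $\beta_S(\mu\vx) = \mu\beta_S(\vx) + (1-\mu)\beta_S(\vnull)$ of the affine-linearity relation with $\mu = 1/\lambda_1(S)$ and $\vx = \vz$, together with $\beta_S(\vnull)_k = 1/(d+1)$, the inequality $\beta_S(\vz/\lambda_1(S))_k \leq 0$ rearranges into exactly $\beta_S(\vz)_k \leq (1-\lambda_1(S))/(d+1)$, which combined with the first step finishes the proof. The argument is entirely elementary and I see no genuine obstacle; the only subtle point is the choice $\vz = \vw - \vu$ (rather than $\vu - \vw$), which is what converts the ``$\leq$'' bound coming from the interior-avoidance argument into the required ``$\geq$'' bound on $(\beta_S(\vu) - \beta_S(\vw))_k$.
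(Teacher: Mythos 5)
Your proof is correct and is essentially the paper's own argument in contrapositive form: both rest on the affine-linearity of $\beta_S$, the identity $\beta_S(\vnull)=\tfrac{1}{d+1}\vone$, and the fact that the nonzero lattice vector $\vw-\vu$ must avoid $\interior{\lambda_1(S)\,S}$. The paper assumes all coordinate differences are small and deduces that $(\vw-\vu)/\lambda_1(S)$ has strictly positive barycentric coordinates, hence lies in $\interior{S}$, a contradiction; you run the same computation forwards from a nonpositive coordinate of $\beta_S((\vw-\vu)/\lambda_1(S))$.
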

\begin{proof} Suppose the opposite, i.e.,  $\beta_S(\vu)_i -
  \beta_S(\vw)_i < \lambda_1/(d+1)$ for all $1\leq i\leq d+1$, and let
  $\vv=\vw-\vu$. Then with $\lambda_1=\lambda_1(S)$ we get 
\begin{equation*}
\beta_S\left(\frac{1}{\lambda_1}\vv\right) = 
 \frac{1}{\lambda_1}\left(\beta_S(\vw)-\beta_S(\vu)\right)+\beta_S(\vnull)>0. 
\end{equation*}
Hence, the non-trivial lattice point $\vv$ 
belongs to $\interior{\lambda_1\,S}$, and thus
contradicting the minimality of $\lambda_1$.
\end{proof}

Obviously,  Lemma \ref{lemma:barycentricinequality} says, that if two lattice points
$\vu$ and $\vw$ in a given simplex are located close to each other, that is, the difference 
of their barycentric coordinates is small, then $\vu-\vw$ will lie inside the simplex as pictured in 
Figure \ref{figure:lemma_barycentric_bound_tikz}.

\begin{figure}[ht]
    \begin{tikzpicture}
    \coordinate (Origin)   at (0,0);
    \coordinate (XAxisMin) at (-2.5,0);
    \coordinate (XAxisMax) at (2,0);
    \coordinate (YAxisMin) at (0,-1.5);
    \coordinate (YAxisMax) at (0,2.5);
    \draw [thin, gray,-latex] (XAxisMin) -- (XAxisMax);
    \draw [thin, gray,-latex] (YAxisMin) -- (YAxisMax);

    \clip (-2.2,-2.2) rectangle (3.2,3.2);
    \pgftransformcm{1}{0}{0}{1.0}{\pgfpoint{0cm}{0cm}}
    
    \coordinate (v_one) at (-1.6,-0.6);
    \coordinate (v_two) at (1.6,-1.2);
    \coordinate (v_three) at (0,1.8);
    \draw [thin, fill=gray, fill opacity=0.2] (v_one) -- (v_two) -- (v_three) -- (v_one);
    
    \filldraw (0.4,0.6) circle (2pt) node[anchor=south west]{$\vu-\vw$};
    \filldraw (-1.4,-0.3) circle (2pt) node[anchor=east]{$\vw$};
    \filldraw (-1,0.3) circle (2pt) node[anchor=east]{$\vu$};

\end{tikzpicture}
  
    \caption{}
    \label{figure:lemma_barycentric_bound_tikz}
\end{figure}

We introduce some more notation.
Let
\begin{equation*}
        B = \left\{ \vx \in \R^{d+1}_{\geq 0}: \sum_{i=1}^{d+1} x_i = 1 \right\},
\end{equation*}
which we regard as the $d$-dimensional simplex in $\R^{d+1}$ of all
feasible barycentric coordinates of points contained in a
$d$-dimensional simplex. 
For a given real number $\rho>0$ let 
\begin{equation}
     n(\rho)= \ceil*{\rho^{-1}(d+1)}
\label{eq:rho} 
\end{equation}
and 
\begin{equation*}
    R_\rho = \frac{1}{n(\rho)} \left\{ \va \in \Z^{d+1}_{\geq 0}: \sum_{i=1}^{d+1} a_i = n(\rho) \right\}.
\end{equation*}

Note that $\#(R_\rho )= \binom{d+n(\rho) }{d}$ and $R_\rho\subset
B$. Let $Z_\rho= \left[0, \frac{1}{n(\rho)}  \right)^d \times \R\subset\R^{d+1}$ be
the cylinder over the half-open $d$-dimensional cube of edge length
$\frac{1}{n(\rho)}$.  For $\vr\in R_\rho$ the intersection of $\vr+Z_\rho$ with the affine space $\{ \vx \in \R^{d+1}: \sum x_i =1 \}$
containing $B$ yields a half-open $d$-dimensional parallelepiped, as
depicted  in Figure \ref{figure:barycentric_box_tikz}.
\begin{figure}[ht]
    \tdplotsetmaincoords{70}{110}
\begin{tikzpicture}[tdplot_main_coords]
\def\RI{2}
\def\RII{1.25}

\coordinate (e1) at (2,0,0);
\coordinate (e2) at (0,2,0);
\coordinate (e3) at (0,0,2);

\coordinate (ax1) at (3,0,0);
\coordinate (ax2) at (0,3,0);
\coordinate (ax3) at (0,0,3);

\coordinate (O) at (0,0,0);

\coordinate (r) at (0,2,0);

\coordinate (box1) at (0,0,2);
\coordinate (box2) at (0,2/4,6/4);
\coordinate (box3) at (2/4,2/4,4/4);
\coordinate (box4) at (2/4,0,6/4);


%
\draw[->] (e1) -- (ax1) node[anchor=north east]{$x_1$};
\draw[->] (e2) -- (ax2) node[anchor=north west]{$x_2$};
\draw[->] (e3) -- (ax3) node[anchor=south]{$x_3$};

\draw[dashed] (O) -- (e1);
\draw[dashed] (O) -- (e2);
\draw[dashed] (O) -- (e3);

\node[draw,circle,inner sep=1pt,fill,shift={(1/8,3/8,-1/2)}] at (0,0,2) {};
\draw[shift={(1/8,3/8,-1/2)}, thin] (0,0,2) -- (0,2/4,6/4);
\draw[shift={(1/8,3/8,-1/2)}, thin ] (2/4,0,6/4) -- (0,0,2);

\filldraw[shift={(1/8,3/8,-1/2)}, gray, opacity=0.0, fill=gray, fill opacity=0.2] (0,0,2) -- (0,2/4,6/4) -- (2/4,2/4,4/4)-- (2/4,0,6/4) -- (0,0,2);

\draw[thin] (e1) -- (e2);
\draw[thin] (e2) -- (e3);
\draw[thin] (e3) -- (e1);

\end{tikzpicture}
    \caption{}
    \label{figure:barycentric_box_tikz}
\end{figure}

Next we claim 
\begin{lemma} With the notation above we have $B\subset
  R_\rho+Z_\rho$.
\label{lem:two}
\end{lemma}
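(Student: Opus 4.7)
The plan is to construct, for any $\vx \in B$, an explicit element $\vr \in R_\rho$ such that $\vx - \vr \in Z_\rho$. The construction is done componentwise by flooring, exploiting that the last coordinate of $Z_\rho$ is unconstrained so the only real requirements are on the first $d$ coordinates.

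More precisely, writing $n = n(\rho)$ and $\vx = (x_1,\dots,x_{d+1})\in B$, I would set
\begin{equation*}
    r_i = \frac{1}{n}\lfloor n\, x_i \rfloor \quad \text{for } 1\leq i\leq d, \qquad r_{d+1} = 1 - \sum_{i=1}^d r_i.
\end{equation*}
By definition $x_i - r_i \in [0, 1/n)$ for $i = 1,\dots,d$, so that $\vx - \vr$ indeed has its first $d$ coordinates in $[0,1/n)$, i.e.\ $\vx - \vr \in Z_\rho$. It remains to verify that $\vr \in R_\rho$.

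For this I need two things: $n r_i \in \Z_{\geq 0}$ for every $i$, and $\sum_{i=1}^{d+1} n r_i = n$. The first $d$ components are non-negative integers (after scaling by $n$) by construction. The sum condition is immediate from the definition of $r_{d+1}$. Finally, $r_{d+1} \geq 0$ follows because
\begin{equation*}
    \sum_{i=1}^d \lfloor n\,x_i\rfloor \leq \sum_{i=1}^d n\,x_i = n(1-x_{d+1}) \leq n,
\end{equation*}
using $\vx \in B$. Hence $\vr \in R_\rho$ and $\vx \in \vr + Z_\rho$, as required.

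I do not expect any real obstacle: the statement is essentially the fact that rounding down the first $d$ barycentric coordinates to the grid of spacing $1/n$ lands inside $B$ because the slack in the sum is automatically absorbed by the last coordinate. The only small point worth flagging explicitly in the proof is the nonnegativity of $r_{d+1}$, which is where the simplex constraint $\sum x_i = 1$ (together with $x_{d+1}\geq 0$) is used.
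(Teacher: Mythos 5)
Your proof is correct and follows exactly the paper's argument: round down the first $d$ barycentric coordinates to the grid of spacing $1/n(\rho)$, let the last coordinate absorb the slack, and verify $r_{d+1}\geq 0$ from $\sum_i x_i = 1$ and $x_{d+1}\geq 0$. No differences worth noting.
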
 
\begin{proof} We may write  
\begin{equation} 
     R_\rho+Z_\rho = \bigcup_{ \vr\in R_\rho}\left\{ \vx \in \R^{d+1}: r_i \leq x_i < r_i +
       \frac{1}{n(\rho)}, i=1,\ldots,d\right\}.
\label{eq:union}
\end{equation}
For a given  $\vx \in B$ let 
$\vr \in \R^{d+1}$ be defined as 
         \begin{align}
         \begin{split} \label{eq:defpointr}
             r_i &=  \frac{ \floor*{ n(\rho) x_i } }{ n(\rho) }
                      \  \text{ for } 1\leq i\leq d , \\
             r_{d+1} &= 1 -  \sum_{i=1}^{d}r_i 
                      = \frac{ n(\rho) 
                             - \sum_{i=1}^d \floor*{ n(\rho)x_i } }
                             { n(\rho) }.
         \end{split}
         \end{align}
Obviously $r_i\geq 0$, $1\leq i\leq d$, $\sum_{i=1}^d r_i=1$ and 
\begin{equation*}
       r_{d+1}= \frac{ n(\rho) 
                             - \sum_{i=1}^d \floor*{ n(\rho)x_i } }
                             { n(\rho) } \geq 1-\sum_{i=1}^d x_i
                             =x_{d+1}\geq 0.
\end{equation*}   
Hence, $\vr\in R_\rho$. We also have  $r_i \leq x_i$, $1\leq i\leq d$,
as well as 
\begin{equation*}
   x_i-r_i=  \frac{n(\rho)x_i-\floor*{ n(\rho) x_i }}{n(\rho)}\leq \frac{1}{n(\rho)}.
\end{equation*}
Thus we have shown $\vx\in\vr+Z_\rho$.
\end{proof}
Although not needed for the proof we remark that the union in \eqref{eq:union} is
disjoint. 

\begin{proof}[Proof of Theorem \ref{theorem:simplexbound}]
    Let $S\in \centroidsimplices{d}$ be a $d$-simplex and for short we write
    $\lambda_1=\lambda_1(S)$. Suppose  
\begin{equation*}
\latticeenumerator{S} > \binom{d+ 
								n(\lambda_1)}{d}= \#(R_{\lambda_1}).
\end{equation*}
According to Lemma \ref{lem:two} the set of all barycentric
    coordinates $B$ of points in $S$ is covered by
    by the cylinders $R_{\lambda_1}+Z_{\lambda_1}$.  Hence there
    exist an $\vr\in R_{\lambda_1}$ and two lattice points $\vu,\vw\in
    S$ such that 
\begin{equation*}
   \beta_S(\vu),\beta_S(\vw)\in \vr + Z_{\lambda_1}.
\end{equation*} 
We may assume $\beta_S(\vu)_{d+1}-\beta_S(\vw)_{d+1}< 1/n(\lambda_1)$ and
due to the definition of $Z_{\lambda_1}$ we also have
$|\beta_S(\vu)_{i}-\beta_S(\vw)_{i}|< 1/n(\lambda_1)$, $1\leq i\leq
d$.  Thus 
\begin{equation*}
\beta_S(\vu)_{i}-\beta_S(\vw)_{i}< \frac{1}{n(\lambda_1)}\leq \lambda_1\frac{1}{d+1}  
\end{equation*}
contradicting Lemma \ref{lemma:barycentricinequality}.

Next we discuss the equality case. 
Let $\lambda_1^{-1} \in \N$ and let 
$\latticeenumerator{S} = \binom{d+n(\lambda_1)}{d}$. 
We first show that the set $\beta_S(S \cap \Z^d)$, consisting of the barycentric coordinates
of all lattice points in $S$, equals $R_{\lambda_1}$.
To this end, suppose there exists
$\vx \in X = \beta_S(S \cap \Z^d) \setminus R_{\lambda_1}$. 
Thus, there exists an index $\ell$ such that $x_\ell \neq \frac{k}{n(\lambda_1)}$
for every integer $0 \leq k \leq n(\lambda_1)$. Let
$\vx$  be such a point in $X$ having maximal $x_\ell$.
We may assume $\ell \neq d+1$ and define
$\vr$ as in \eqref{eq:defpointr}, where $\rho = \lambda_1$.
Accordingly it holds $\vr \in \beta_S(S \cap \Z^d)$. Since $r_\ell < x_\ell$,
we have $r_{d+1} > 0$ and thus $r_{d+1} \geq \frac{1}{n(\lambda_1)}$.
Let
\begin{align*}
	t_i &=  r_i	\  \text{ for } 1\leq i\leq d , i \neq \ell, \\
	t_\ell &= r_\ell + \frac{1}{n(\lambda_1)} = \frac{ \floor*{ n(\rho) x_i } + 1}{ n(\lambda_1) }, \\
	t_{d+1} &= r_{d+1} - \frac{1}{n(\lambda_1)}.
\end{align*}
Then $\vt \in R_{\lambda_1}$. Let $\vv \in S \cap \Z^d$ be the unique lattice point
in $S$ such that $\beta_S(\vv) \in \vt + Z_{\lambda_1}$. 
Then $\beta_S(\vv)_\ell = t_\ell$ by the assumption on $\vx$.
We now conclude

\begin{align*}
	x_i - \beta_S(\vv)_i & \leq x_i - t_i \leq x_i - r_i < \frac{1}{n(\lambda_1)}, 
		 \ 1 \leq i \leq d, \\
	\beta_S(\vv)_i - x_i & < t_i + \frac{1}{n(\lambda_1)} - x_i
	   = r_i + \frac{1}{n(\lambda_1)} - x_i \leq \frac{1}{n(\lambda_1)}, \ i \notin \{\ell, d+1\}, \\
	\beta_S(\vv)_\ell - x_\ell &= t_\ell - x_\ell = 
	r_\ell + \frac{1}{n(\lambda_1)} - x_\ell < \frac{1}{n(\lambda_1)}.
\end{align*}
Clearly, $x_{d+1} - \beta_S(\vv)_{d+1} < \frac{1}{n(\lambda_1)}$ or 
$\beta_S(\vv)_{d+1} - x_{d+1} < \frac{1}{n(\lambda_1)}$. Let $\vz$ be the lattice
point in $S$ such that $\beta_S(\vz) = \vx$. Then $\vz$ and $\vv$ contradict
Lemma \ref{lemma:barycentricinequality}, and thus we have $\beta_S(S \cap \Z^d) = R_{\lambda_1}$.

We now show that $S$ and $\lambda_1^{-1}\ehrhartsimplex{d}$ are
unimodularly equivalent. Let $S = \conv\{\vv_1, \vv_2, \ldots, \vv_{d+1}\}$. Since
$\beta_S(S \cap \Z^d) = R_{\lambda_1}$ there are exactly $n(\lambda_1)+1$ lattice points
on every edge of $S$. Therefore $\vw_i = \frac{1}{n(\lambda_1)}(\vv_i - \vv_{d+1}) \in \Z^d$
for all $i$. Moreover, by Lemma 5 in \cite{MR1996360} it holds 
$\volume{\lambda_1 S} \leq \volume{\ehrhartsimplex{d}}$ and letting $M$ be the $d \times d$
Matrix having columns $\vw_1, \vw_2, \ldots, \vw_{d}$ we conclude
\begin{equation*}
	|M| = \frac{d!}{n(\lambda_1)^d} \volume{S} 
	\leq \frac{d!}{n(\lambda_1)^d} \volume{\lambda_1^{-1} \ehrhartsimplex{d}} = 
	\frac{d!}{(d+1)^d} \volume{\ehrhartsimplex{d}} = 1.
\end{equation*}
Thus, $M$ is unimodular and the equation
\begin{equation*}
	M(\lambda_1^{-1} \ehrhartsimplex{d} + \lambda_1^{-1} \vone) + \vv_{d+1} = S,
\end{equation*}
implies that $S$ and $\lambda_1^{-1}\ehrhartsimplex{d}$ are indeed
unimodularly equivalent.
\end{proof}

\section{The Proof of Theorem \ref{theorem:planarcase}} \label{section:planarcase}

Let $\boundary{K}$ denote the boundary of a set $K$. 
Pick proved the following remarkable  identity  for lattice polygons.  
\begin{theorem}[Pick, \cite{Pick}]
    Let $P$ be a lattice polygon in the plane. Then
    \begin{equation*}
        \latticeenumerator{P} = \volume{P} + \frac{1}{2}\latticeenumerator{\boundary{P}} + 1.
    \end{equation*}
\end{theorem}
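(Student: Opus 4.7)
The plan is to reduce Pick's identity to a local statement on primitive lattice triangles via the discrepancy
\[
f(P) := \latticeenumerator{P} - \volume{P} - \tfrac{1}{2}\latticeenumerator{\boundary P} - 1,
\]
so that the theorem amounts to $f(P)=0$ for every lattice polygon $P$. My strategy splits into three parts: first, prove $f$ is additive under gluing along a common lattice edge; second, triangulate $P$ into \emph{primitive} lattice triangles, i.e., lattice triangles whose only lattice points are their three vertices; third, verify $f(T)=0$ for each such triangle.

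For additivity, I would take two lattice polygons $P_1,P_2$ with disjoint interiors meeting in a single lattice segment $E$ carrying $k$ lattice points in its relative interior in addition to its two endpoints. Routine inclusion--exclusion gives $\volume{P_1\cup P_2}=\volume{P_1}+\volume{P_2}$, $\latticeenumerator{P_1\cup P_2}=\latticeenumerator{P_1}+\latticeenumerator{P_2}-(k+2)$, and $\latticeenumerator{\boundary(P_1\cup P_2)}=\latticeenumerator{\boundary P_1}+\latticeenumerator{\boundary P_2}-(2k+2)$, the last because the $k$ relative interior lattice points of $E$ move into the interior of $P_1\cup P_2$, while its two endpoints remain on the outer boundary but are double-counted. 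Substituting these into the definition of $f$ yields $f(P_1\cup P_2)=f(P_1)+f(P_2)$.

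For the triangulation step I would start with any triangulation of $P$ whose vertices are the vertices of $P$, and then repeatedly split any triangle that contains a lattice point in its interior or in the relative interior of one of its edges. The procedure terminates because $P$ contains only finitely many lattice points, and every resulting triangle is primitive. Iterating the additivity from the previous step reduces the theorem to showing $f(T)=0$ for every primitive lattice triangle $T$.

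The heart of the argument is now to show that every primitive lattice triangle has area exactly $1/2$, whence $f(T)=3-\tfrac12-\tfrac32-1=0$. After translating a vertex of $T$ to $\vnull$, let $\vu,\vw\in\Z^2$ be the two edge vectors at this vertex and set $\Pi=\conv\{\vnull,\vu,\vw,\vu+\vw\}$. If $\{\vu,\vw\}$ were not a $\Z$-basis of $\Z^2$, the sublattice $\Z\vu+\Z\vw$ would have index $\geq 2$ in $\Z^2$, forcing $\Pi$ to contain a lattice point $\vp$ distinct from its four vertices. Since the reflection $\vx\mapsto \vu+\vw-\vx$ fixes $\Z^2$ setwise (because $\vu+\vw\in\Z^2$) and swaps the two triangular halves $T$ and $\Pi\setminus T$, either $\vp$ or its reflection lies in $T\setminus\{\vnull,\vu,\vw\}$, contradicting primitivity. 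Hence $\{\vu,\vw\}$ is a basis, $\Pi$ has unit area, and $\volume{T}=1/2$. This base case is the main obstacle of the proof: it is the only place where the arithmetic of $\Z^2$ enters essentially, and equivalently it asserts that every primitive lattice triangle is unimodularly equivalent to $\conv\{\vnull,\ve_1,\ve_2\}$. Summing $f(T_i)=0$ over a primitive triangulation and invoking additivity finally gives $f(P)=0$, which is Pick's identity.
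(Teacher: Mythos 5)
The paper offers no proof of this statement at all: Pick's identity is quoted as a classical tool (with a reference to Pick's 1899 paper) and is only \emph{used} in the proof of Theorem \ref{theorem:planarcase}, so there is nothing in-paper to compare your argument with. What you propose is the standard decomposition proof, and its substance is sound. The additivity computation for two polygons meeting in a single lattice segment is correct, and your base case is exactly the right arithmetic heart: the half-open parallelepiped $\{s\vu+t\vw: 0\le s,t<1\}$ contains precisely $[\Z^2:\Z\vu+\Z\vw]$ lattice points, none of which is $\vu$, $\vw$ or $\vu+\vw$, and the point reflection in $\frac{1}{2}(\vu+\vw)$ moves any such extra point into $T$; hence the index is $1$, $\volume{T}=1/2$, and $f(T)=3-\frac12-\frac32-1=0$.

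The one step that does not go through as written is ``iterating the additivity''. Your lemma covers only the case where the two pieces meet in a \emph{single} segment, and neither of your reduction mechanisms stays inside that case. A lattice triangle can have an interior lattice point but no non-vertex lattice point on its edges (e.g.\ $\conv\{\vnull,(2,1),(1,2)\}$ with interior point $(1,1)$); splitting it at that point produces three triangles, and the third one meets the union of the first two in a two-edge path. The same phenomenon occurs when you try to reassemble a completed primitive triangulation one triangle at a time: in general some triangle must be attached along two of its edges. The repair is cheap but should be made explicit: prove additivity when $P_1\cap P_2$ is a simple lattice path with $m$ lattice points whose two endpoints lie on $\boundary{(P_1\cup P_2)}$. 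Then $\latticeenumerator{P_1\cup P_2}=\latticeenumerator{P_1}+\latticeenumerator{P_2}-m$ and $\latticeenumerator{\boundary{(P_1\cup P_2)}}=\latticeenumerator{\boundary{P_1}}+\latticeenumerator{\boundary{P_2}}-2(m-2)-2$, and the correction terms again cancel, so $f(P_1\cup P_2)=f(P_1)+f(P_2)$. With this strengthened lemma (or, alternatively, with an induction organized entirely around cutting along chords between boundary lattice points), your argument closes and gives a complete, self-contained proof of Pick's identity.
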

Pick's theorem has been generalised to higher dimensions by Ehrhart, cf. 
\cite{MR0130860,MR0213320,MR0234919}, and for a modern introduction to the
underlying theory we refer to \cite{MR2271992}. 
Scott \cite{MR0430960} stated the following result having a very similar flavor.
\begin{theorem}[Scott, \cite{MR0430960}]
    Let $P$ be a convex lattice polygon with at least one interior point. Then 
\begin{equation*}
    \latticeenumerator{\boundary{P}}-2\latticeenumerator{\interior{P}}
\leq 7, 
\end{equation*} 
 and  equality is attained if and only if $P$ is unimodularly  equivalent
    to $\ehrhartsimplex{2}$.  
\end{theorem}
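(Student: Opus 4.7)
The plan is to reduce Theorem \ref{theorem:planarcase} to the lattice-polygon setting by passing to $P=\conv(K\cap\Z^2)$, a lattice polygon with $\latticeenumerator{P}=\latticeenumerator{K}$ and $P\subseteq K$, and then to split into cases according to the position of the origin in $P$. If $\latticeenumerator{K}=1$ the bound is immediate. If $\dim P\le 1$, all lattice points of $K$ lie on one line $L$ through the origin; since $\vnull$ is the only interior lattice point of $K$, the segment $K\cap L$ has $\vnull$ in its relative interior and contains at most three lattice points, so $\latticeenumerator{K}\le 3$. Hence I may assume that $P$ is a two-dimensional lattice polygon, and since $\interior{P}$ is an open subset of $K$ I have $\interior{P}\subseteq\interior{K}$ and therefore $\interior{P}\cap\Z^2\subseteq\{\vnull\}$.

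Case 1: the origin lies in $\interior{P}$. Then $\vnull$ is the unique interior lattice point of $P$, and Scott's theorem yields $\latticeenumerator{\boundary{P}}\le 2+7=9$, hence $\latticeenumerator{K}=\latticeenumerator{P}\le 10$. If equality holds, the equality characterization in Scott's theorem forces $P$ to be unimodularly equivalent to $\ehrhartsimplex{2}$, and in particular $\volume{P}=9/2$. Ehrhart's planar bound gives $\volume{K}\le (d+1)^d/d!=9/2$, so together with $P\subseteq K$ this forces $K=P$, and $K$ itself is unimodularly equivalent to $\ehrhartsimplex{2}$.

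Case 2: the origin lies on $\boundary{P}$. Then $\vnull\notin\interior{P}$, so $P$ has no interior lattice points, and Pick's theorem reduces to $\latticeenumerator{K}=\latticeenumerator{P}=\latticeenumerator{\boundary{P}}=2\volume{P}+2$. Let $\ell$ be a supporting line of $P$ at $\vnull$ and $H^+$ the closed half-plane containing $P$. Applying Gr\"unbaum's inequality to the centered body $K$ in the opposite closed half-plane yields $\volume{K\setminus H^+}\ge(2/3)^2\volume{K}=(4/9)\volume{K}$, so $\volume{P}\le\volume{K\cap H^+}\le(5/9)\volume{K}\le(5/9)\cdot(9/2)=5/2$, and hence $\latticeenumerator{K}\le 7<10$, ruling out equality in this case.

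The main obstacle is Case 2: Pick's theorem together with the plain Ehrhart bound $\volume{P}\le\volume{K}\le 9/2$ only yields $\latticeenumerator{P}\le 11$, which is too weak. The centroid hypothesis must be brought to bear explicitly, and Gr\"unbaum's inequality is the natural tool, since it guarantees a definite amount of $K$-volume on the lattice-free side of the supporting line $\ell$; this is precisely what shaves the count strictly below $10$ and isolates the equality case in Case 1.
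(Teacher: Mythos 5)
You have not proved the statement in question. The statement is Scott's theorem: for an \emph{arbitrary} convex lattice polygon $P$ with at least one interior lattice point, $\latticeenumerator{\boundary{P}}-2\latticeenumerator{\interior{P}}\leq 7$, with equality exactly for polygons unimodularly equivalent to $\ehrhartsimplex{2}$. There is no centroid hypothesis and no restriction to a single interior lattice point; the content is that a lattice polygon with $i\geq 1$ interior lattice points can have at most $2i+7$ boundary lattice points. Your argument never addresses this. Instead it proves Theorem \ref{theorem:planarcase} (the bound $\latticeenumerator{K}\leq 10$ for $K\in\centroidconvexbodies{2}$ with $\latticeenumerator{\interior{K}}=1$), and it does so by \emph{invoking} Scott's theorem as a black box in Case 1, both for the inequality $\latticeenumerator{\boundary{P}}\leq 9$ and for the equality characterization. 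As a proof of the stated theorem this is circular: the very inequality and its equality case are assumed, not derived. A genuine proof of Scott's theorem requires a different argument altogether (Scott's original one proceeds by a case analysis on the lattice width of $P$ combined with Pick-type counting).

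For what it is worth, the paper itself does not prove Scott's theorem either; it quotes it from \cite{MR0430960} and uses it, together with the theorems of Pick, Gr\"unbaum and Ehrhart, to establish Theorem \ref{theorem:planarcase}. Measured against \emph{that} proof, your write-up is essentially the same: the case split on whether $\vnull\in\interior{P}$, the use of Gr\"unbaum's inequality plus Ehrhart's volume bound to get $\latticeenumerator{K}\leq 7$ when $\vnull\in\boundary{P}$, and Scott's equality case to identify $\ehrhartsimplex{2}$. The only cosmetic difference is that in the main case the paper derives $\latticeenumerator{K}\leq 10$ from Pick's formula and $\volume{K}\leq 9/2$ rather than from Scott's inequality directly. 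So either re-read which statement you were asked to prove, or relabel your work as a proof of Theorem \ref{theorem:planarcase}; in its present form it cannot stand as a proof of Scott's theorem.
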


One of the most fascinating theorems regarding convex bodies and their centroids was presented
by Gr\"unbaum. It provides an interesting property of the centroid in terms of mass-distribution
of the given convex body.
\begin{theorem}[Gr\"unbaum, \cite{MR0124818}]
    Let $K \in \centroidconvexbodies{d}$ be convex body and let $H
    \subset \R^d$ be a half-space containing the centroid $c(K)$,  then
    \begin{equation*}
        \volume{K \cap H} \geq \left( \frac{d}{d+1} \right)^d \volume{K}.
    \end{equation*}
\end{theorem}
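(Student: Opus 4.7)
The plan is to establish Gr\"unbaum's inequality by Schwarz symmetrization, which reduces the statement to a one-dimensional variational problem for bodies of revolution, and then to show that the extremal case is a cone whose apex lies in $H$.

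First, by applying an affine transformation (which preserves both the centroid condition and the ratio $\volume{K \cap H}/\volume{K}$), I may assume that $H = \{\vx \in \R^d : x_1 \leq 0\}$, so that the centroid $\vnull = c(K)$ lies on the bounding hyperplane $\{x_1 = 0\}$. Next, apply Schwarz symmetrization about the $x_1$-axis: each slice $K \cap \{x_1 = t\}$ is replaced by the $(d-1)$-dimensional ball of the same $(d-1)$-volume centered at $(t, 0, \ldots, 0)$. By Brunn--Minkowski the resulting body $K^*$ is convex; by construction $\volume{K^*} = \volume{K}$ and $\volume{K^* \cap H} = \volume{K \cap H}$, since both depend only on the slice volumes. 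The centroid of $K^*$ lies on the $x_1$-axis (by rotational symmetry) and its $x_1$-coordinate agrees with that of $c(K)$, hence equals zero. It therefore suffices to prove the inequality for $K^*$.

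For such a body of revolution, let $r \colon [a, b] \to \R_{\geq 0}$ denote the slice radius, so that $r$ is concave (by convexity of $K^*$). With $\omega_{d-1}$ the volume of the unit $(d-1)$-ball, the hypotheses become
\begin{equation*}
\omega_{d-1}\int_a^b r(t)^{d-1}\, \dd t = \volume{K}, \qquad \int_a^b t\, r(t)^{d-1}\, \dd t = 0,
\end{equation*}
and the goal is to show
\begin{equation*}
\int_a^0 r(t)^{d-1}\, \dd t \geq \left(\frac{d}{d+1}\right)^d \int_a^b r(t)^{d-1}\, \dd t.
\end{equation*}
The crucial step is to argue that, under these constraints, the minimum of the left-hand side is attained when $r$ is affine and vanishes at $a$, i.e., when $K^*$ is a cone with apex $(a, \vnull) \in H$ and base in $\{x_1 = b\}$. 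Once this reduction is in place, the centroid condition forces $a = -d b$, and the substitution $u = t - a$ gives $\int_a^0 r^{d-1}\, \dd t / \int_a^b r^{d-1}\, \dd t = (-a)^d/(b-a)^d = (db)^d/((d+1)b)^d = (d/(d+1))^d$, the stated bound.

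The main obstacle is the cone-reduction step: one must show that the minimum of $r \mapsto \int_a^0 r^{d-1}\, \dd t$ over the convex class of concave $r$ satisfying the two integral constraints is attained by a single linear segment vanishing at $a$. One approach is a convex-analysis/Lagrange-multiplier argument identifying extreme points of the constraint set as piecewise affine functions, then using the two integral conditions together with a direct comparison to eliminate all but the cone with apex at $a$. Alternatively, one can exhibit an explicit rearrangement that straightens any concave $r$ while preserving the two integral constraints and monotonically decreasing $\int_a^0 r^{d-1}\, \dd t$. This analytic step is the heart of Gr\"unbaum's theorem; the rest of the argument reduces either to the symmetrization or to the explicit cone computation.
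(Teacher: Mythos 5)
This is a theorem the paper quotes from Gr\"unbaum's 1960 article and does not prove, so there is no in-paper argument to compare against; your proposal has to stand on its own. Its overall architecture is the standard one: reduce to the case where the bounding hyperplane of $H$ passes through the centroid (if $c(K)$ is interior to $H$, shrink $H$ to a parallel half-space through $c(K)$, which only decreases $\volume{K\cap H}$ --- worth saying explicitly), Schwarz-symmetrize about the axis normal to $H$, note that Brunn--Minkowski makes the radius function $r$ concave while volume, half-space volume and the relevant centroid coordinate are preserved, and finally compute the ratio for a cone. All of these reductions are correct, and your cone computation ($a=-db$, ratio $(d/(d+1))^d$) checks out.

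However, the proof has a genuine gap exactly where you flag ``the main obstacle'': you never actually prove that among concave $r\ge 0$ on $[a,b]$ with prescribed $\int r^{d-1}$ and $\int t\,r^{d-1}=0$, the quantity $\int_a^0 r^{d-1}$ is minimized by a linear $r$ vanishing at $a$. The two routes you gesture at are not arguments. Worse, the first one (``identify extreme points of the constraint set'') is built on a false premise: the functional $\int_a^0 r^{d-1}\,\dd t$ and the constraints are linear in $f=r^{d-1}$, but the admissible set $\{f\ge 0: f^{1/(d-1)}\ \text{concave}\}$ is \emph{not} convex (e.g.\ for $d=3$, $f=t^2$ and $g=(1-t)^2$ both lie in it, while $\bigl(\tfrac{1}{2}(f+g)\bigr)^{1/2}=\bigl(\,(t-\tfrac12)^2+\tfrac14\,\bigr)^{1/2}$ is convex, not concave), so a naive extreme-point/Lagrange argument does not apply. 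The actual content of Gr\"unbaum's theorem is a comparison argument using the concavity of $r$: one replaces each of the two pieces $K^*\cap\{x_1\le 0\}$ and $K^*\cap\{x_1\ge 0\}$ by a cone with the same base $K^*\cap\{x_1=0\}$ and the same volume, shows via a one-crossing/moment inequality that each replacement moves the centroid in a controlled direction, and only then performs the explicit cone calculation. Until that comparison is written out, what you have is a correct reduction of the theorem to its hardest step, not a proof.
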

It is noteworthy that equality holds in Gr\"unbaum's theorem if $K$ is a simplex.

In order to prove Theorem \ref{theorem:planarcase} we will also use the following theorem by
Ehrhart discussed already in Section \ref{section:introduction}.
\begin{theorem}[Ehrhart, \cite{MR0066430, MR0163219}]
    Let $K \in \centroidconvexbodies{2}$ with $\volume{K}\geq 9/2$. Then $K$ contains
    at least two lattice points distinct from the origin.
\label{thm:ehrhart}
\end{theorem}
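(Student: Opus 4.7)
I argue by contradiction: suppose $K\in\centroidconvexbodies{2}$ has $\volume{K}\geq 9/2$ yet contains at most one nonzero lattice point. The idea is to pass to the $o$-symmetric body $L := K \cap (-K)$ and combine Minkowski's first theorem with a volume-ratio inequality arising from the centroid hypothesis.

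First, $L$ contains no nonzero lattice point: such a $\vv\in L\cap\Z^2$ would satisfy $\vv\in K$ and $-\vv\in K$, producing two distinct nonzero lattice points of $K$, contrary to hypothesis. Minkowski's first theorem then yields $\volume{L}\leq 4$. The centroid assumption supplies a lower bound on $\volume{L}$: Theorem~\ref{thm:MilmanPajor} (Milman--Pajor) gives $\volume{L}\geq \tfrac14 \volume{K}$, hence $\volume{K}\leq 16$; the planar refinement due to Stewart \cite{MR0097771} strengthens this to $\volume{L}\geq \tfrac23\volume{K}$, so $\volume{K}\leq 6$.

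Closing the remaining gap from $6$ down to $9/2$ is the main obstacle, and I would attack it by a joint equality-case analysis. Stewart's inequality is tight only if $K$ is a triangle, while Minkowski's is tight only if $L$ is a lattice translate of $[-1,1]^2$---but such a parallelotope contains the nonzero lattice points $(\pm 1,0)$ and $(0,\pm 1)$, incompatible with $L\cap\Z^2=\{\vnull\}$. Using Grünbaum's theorem (recall $\volume{K\cap H}\geq\tfrac49\volume{K}$ for every half-plane $H$ through the origin) to control the asymmetry of $K$ in each of the three principal directions of the extremal triangle, one reduces to showing that every centered triangle $T$ with $T\cap(-T)$ free of nonzero lattice points satisfies $\volume{T}\leq 9/2$, with equality precisely when $T$ is unimodularly equivalent to $\ehrhartsimplex{2}$. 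Since $\ehrhartsimplex{2}$ itself has nine nonzero lattice points, this equality case contradicts the one-lattice-point hypothesis, and strict inequality contradicts $\volume{K}\geq 9/2$. This final sharpening---delicate, case-by-case planar geometry rather than any clean symmetrization---is the technical heart of Ehrhart's original argument in \cite{MR0066430, MR0163219}.
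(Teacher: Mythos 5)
This statement is one the paper does not prove: it is quoted as a classical theorem of Ehrhart with citations to \cite{MR0066430, MR0163219}, so there is no in-paper argument to compare against. Judged on its own, your proposal has a genuine gap. The reduction is fine as far as it goes: if $K$ had at most one nonzero lattice point, then $L=K\cap(-K)$ would contain none, Minkowski gives $\volume{L}\le 4$, and Stewart's planar inequality $\volume{L}\ge\tfrac23\volume{K}$ gives $\volume{K}\le 6$. But the theorem's threshold is $9/2$, and the step from $6$ down to $9/2$ --- which you yourself flag as ``the main obstacle'' --- is precisely the content of the theorem; what you offer for it is a plan, not a proof. The difficulty is structural, not just a matter of tightening constants: a centrally symmetric planar convex body with volume $3$ (indeed anything below $4$) and no nonzero lattice point exists (e.g.\ $[-1+\varepsilon,1-\varepsilon]^2$), so the two constraints $\volume{L}\le 4$ and $\volume{L}\ge\tfrac23\volume{K}$ are simultaneously satisfiable for every $\volume{K}$ up to $6$. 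Consequently no ``joint equality case'' analysis of these two inequalities can lower the bound; one would need a quantitative stability version of at least one of them, or (as in Ehrhart's original argument) a direct planar construction that never passes through $K\cap(-K)$ at all.

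Two smaller points. The equality case of Minkowski's first theorem in the plane is not only the square: symmetric hexagons that tile the plane also achieve volume $4$ with no nonzero interior lattice point, so the dichotomy you build the sketch on is already inaccurate. And note that the extremal body $\ehrhartsimplex{2}$ has $\volume{\ehrhartsimplex{2}}=9/2$ and $L=\ehrhartsimplex{2}\cap(-\ehrhartsimplex{2})$ of volume exactly $3$, which shows your intermediate bounds are consistent with the truth but also that the constant $9/2$ is sharp --- any correct proof must see this example, and the Minkowski--Stewart combination does not distinguish it from bodies of volume up to $6$. As written, your argument proves only the weaker statement with $9/2$ replaced by $6$.
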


\begin{proof}[Proof of Theorem \ref{theorem:planarcase}] 
    Let $P = \conv({K \cap \Z^2})$. If $\vnull \notin \interior{P}$ then there exists
    a half-space $H$ containing $P$ and containing $\vnull$ in its boundary such that
    $K \cap H \cap \Z^2 = K \cap \Z^2$. Gr\"unbaum's Theorem and Ehrhart's Theorem imply that
    \begin{equation*}
            \volume{P}
            \leq \volume{K \cap H} 
            \leq \left(1 - \left(\frac{2}{3}\right)^2\right) \volume{K}
            = \frac{5}{9}\volume{K}
            \leq \frac{5}{2}.
    \end{equation*}
    In turn applying Pick's Theorem yields
    \begin{align*}
        \latticeenumerator{K} & = \latticeenumerator{K \cap H} = \latticeenumerator{P}  
            =\volume{P} + \frac{1}{2} \latticeenumerator{\boundary{P}} + 1 
            \leq \frac{7}{2} + \frac{1}{2} \latticeenumerator{K}.
    \end{align*}
    Thus $\latticeenumerator{K} \leq 7$.
    
    Next  we assume thus that $\vnull \in \interior{P}$. Applying the theorems of Ehrhart and Pick
    again gives that
    \begin{align} \label{equation:pickehrhart} 
        \begin{split}
            \frac{9}{2} 
            &\geq \volume{K} 
            \geq \volume{P} 
            = \latticeenumerator{P} - \frac{1}{2}\latticeenumerator{\boundary{P}} -1 \\
            &= \latticeenumerator{K} - \frac{1}{2}(\latticeenumerator{K}-1) - 1
            = \frac{1}{2}\latticeenumerator{K} - \frac{1}{2},
        \end{split}
    \end{align}
    and thus $\latticeenumerator{K} \leq 10$. 
    
    Now, let $\latticeenumerator{K} = 10$. Then \eqref{equation:pickehrhart} implies
    that $\volume{K}=\volume{P}$ and thus $P=K$ by compactness. 
    Furthermore, we know that $P$ contains a lattice point
    in its interior as $\latticeenumerator{P}=10$. Scott's
    Theorem implicates that $P=K$ is unimodularly equivalent to $\ehrhartsimplex{2}$.
\end{proof}

\begin{bibdiv}
\begin{biblist}

\bib{MR2967480}{article}{
      author={Averkov, Gennadiy},
       title={On the size of lattice simplices with a single interior lattice
  point},
        date={2012},
        ISSN={0895-4801},
     journal={SIAM J. Discrete Math.},
      volume={26},
      number={2},
       pages={515\ndash 526},
         url={http://dx.doi.org/10.1137/110829052},
      review={\MR{2967480}},
}

\bib{MR3318147}{article}{
      author={Averkov, Gennadiy},
      author={Kr{\"u}mpelmann, Jan},
      author={Nill, Benjamin},
       title={Largest integral simplices with one interior integral point:
  {S}olution of {H}ensley's conjecture and related results},
        date={2015},
        ISSN={0001-8708},
     journal={Adv. Math.},
      volume={274},
       pages={118\ndash 166},
         url={http://dx.doi.org/10.1016/j.aim.2014.12.035},
      review={\MR{3318147}},
}

\bib{MR2271992}{book}{
      author={Beck, Matthias},
      author={Robins, Sinai},
       title={Computing the continuous discretely},
      series={Undergraduate Texts in Mathematics},
   publisher={Springer, New York},
        date={2007},
        ISBN={978-0-387-29139-0; 0-387-29139-3},
        note={Integer-point enumeration in polyhedra},
      review={\MR{2271992 (2007h:11119)}},
}

\bib{2012arXiv1204.1308B}{article}{
      author={{Berman}, R.~J.},
      author={{Berndtsson}, B.},
       title={{The volume of K\"ahler-Einstein Fano varieties and convex
  bodies}},
        date={2012-04},
     journal={ArXiv e-prints},
      eprint={1204.1308},
}

\bib{MR1194034}{article}{
      author={Betke, U.},
      author={Henk, M.},
      author={Wills, J.~M.},
       title={Successive-minima-type inequalities},
        date={1993},
        ISSN={0179-5376},
     journal={Discrete Comput. Geom.},
      volume={9},
      number={2},
       pages={165\ndash 175},
         url={http://dx.doi.org/10.1007/BF02189316},
      review={\MR{1194034 (93j:52026)}},
}

\bib{MR1316393}{book}{
      author={Croft, Hallard~T.},
      author={Falconer, Kenneth~J.},
      author={Guy, Richard~K.},
       title={Unsolved problems in geometry},
      series={Problem Books in Mathematics},
   publisher={Springer-Verlag, New York},
        date={1994},
        ISBN={0-387-97506-3},
        note={Corrected reprint of the 1991 original [ MR1107516 (92c:52001)],
  Unsolved Problems in Intuitive Mathematics, II},
      review={\MR{1316393 (95k:52001)}},
}

\bib{MR3022127}{article}{
      author={Draisma, Jan},
      author={McAllister, Tyrrell~B.},
      author={Nill, Benjamin},
       title={Lattice-width directions and {M}inkowski's {$3^d$}-theorem},
        date={2012},
        ISSN={0895-4801},
     journal={SIAM J. Discrete Math.},
      volume={26},
      number={3},
       pages={1104\ndash 1107},
         url={http://dx.doi.org/10.1137/120877635},
      review={\MR{3022127}},
}

\bib{MR0066430}{article}{
      author={Ehrhart, E.},
       title={Une g\'en\'eralisation du th\'eor\`eme de {M}inkowski},
        date={1955},
     journal={C. R. Acad. Sci. Paris},
      volume={240},
       pages={483\ndash 485},
      review={\MR{0066430 (16,574b)}},
}

\bib{MR0130860}{article}{
      author={Ehrhart, E.},
       title={Sur les poly\`edres rationnels homoth\'etiques \`a {$n$}\
  dimensions},
        date={1962},
     journal={C. R. Acad. Sci. Paris},
      volume={254},
       pages={616\ndash 618},
      review={\MR{0130860 (24 \#A714)}},
}

\bib{MR0213320}{article}{
      author={Ehrhart, E.},
       title={Sur un probl\`eme de g\'eom\'etrie diophantienne lin\'eaire. {I}.
  {P}oly\`edres et r\'eseaux},
        date={1967},
        ISSN={0075-4102},
     journal={J. Reine Angew. Math.},
      volume={226},
       pages={1\ndash 29},
      review={\MR{0213320 (35 \#4184)}},
}

\bib{MR0234919}{article}{
      author={Ehrhart, E.},
       title={Sur la loi de r\'eciprocit\'e des poly\`edres rationnels},
        date={1968},
     journal={C. R. Acad. Sci. Paris S\'er. A-B},
      volume={266},
       pages={A696\ndash A697},
      review={\MR{0234919 (38 \#3233)}},
}

\bib{MR518864}{article}{
      author={Ehrhart, E.},
       title={Volume r\'eticulaire critique d'un simplexe},
        date={1979},
        ISSN={0075-4102},
     journal={J. Reine Angew. Math.},
      volume={305},
       pages={218\ndash 220},
         url={http://dx.doi.org/10.1515/crll.1979.305.218},
      review={\MR{518864 (80b:52022)}},
}

\bib{MR0163219}{article}{
      author={Ehrhart, Eug{\`e}ne},
       title={Une g\'en\'eralisation probable du th\'eor\`eme fondamental de
  {M}inkowski},
        date={1964},
     journal={C. R. Acad. Sci. Paris},
      volume={258},
       pages={4885\ndash 4887},
      review={\MR{0163219 (29 \#522)}},
}

\bib{MR2335496}{book}{
   author={Gruber, Peter M.},
   title={Convex and discrete geometry},
   series={Grundlehren der Mathematischen Wissenschaften [Fundamental
   Principles of Mathematical Sciences]},
   volume={336},
   publisher={Springer, Berlin},
   date={2007},
   pages={xiv+578},
   isbn={978-3-540-71132-2},
   review={\MR{2335496 (2008f:52001)}},
}

\bib{MR0124818}{article}{
      author={Gr{\"u}nbaum, B.},
       title={Partitions of mass-distributions and of convex bodies by
  hyperplanes.},
        date={1960},
        ISSN={0030-8730},
     journal={Pacific J. Math.},
      volume={10},
       pages={1257\ndash 1261},
      review={\MR{0124818 (23 \#A2128)}},
}

\bib{HHZ}{article}{
author={Henk, Martin},
author={Henze, Matthias},
author={Hernandez Cifre, Maria},
title={Variations of Minkowski’s theorem on successive minima},
journal={Forum Math.}
note={(accepted for publication)}
}

\bib{MR688412}{article}{
      author={Hensley, Douglas},
       title={Lattice vertex polytopes with interior lattice points},
        date={1983},
        ISSN={0030-8730},
     journal={Pacific J. Math.},
      volume={105},
      number={1},
       pages={183\ndash 191},
         url={http://projecteuclid.org/euclid.pjm/1102723501},
      review={\MR{688412 (84c:52016)}},
}

\bib{MR1138580}{article}{
      author={Lagarias, Jeffrey~C.},
      author={Ziegler, G{\"u}nter~M.},
       title={Bounds for lattice polytopes containing a fixed number of
  interior points in a sublattice},
        date={1991},
        ISSN={0008-414X},
     journal={Canad. J. Math.},
      volume={43},
      number={5},
       pages={1022\ndash 1035},
         url={http://dx.doi.org/10.4153/CJM-1991-058-4},
      review={\MR{1138580 (92k:52032)}},
}

\bib{MR1764107}{article}{
      author={Milman, V.~D.},
      author={Pajor, A.},
       title={Entropy and asymptotic geometry of non-symmetric convex bodies},
        date={2000},
        ISSN={0001-8708},
     journal={Adv. Math.},
      volume={152},
      number={2},
       pages={314\ndash 335},
         url={http://dx.doi.org/10.1006/aima.1999.1903},
      review={\MR{1764107 (2001e:52004)}},
}

\bib{MR0249269}{book}{
      author={Minkowski, Hermann},
       title={Geometrie der {Z}ahlen},
      series={Bibliotheca Mathematica Teubneriana, Band 40},
   publisher={Johnson Reprint Corp., New York-London},
        date={1968},
      review={\MR{0249269 (40 \#2515)}},
}

\bib{MR3276123}{article}{
      author={Nill, Benjamin},
      author={Paffenholz, Andreas},
       title={On the equality case in {E}hrhart's volume conjecture},
        date={2014},
        ISSN={1615-715X},
     journal={Adv. Geom.},
      volume={14},
      number={4},
       pages={579\ndash 586},
         url={http://dx.doi.org/10.1515/advgeom-2014-0001},
      review={\MR{3276123}},
}

\bib{Pick}{article}{
      author={Pick, G.A.},
       title={Geometrisches zur {Z}ahlenlehre.},
        date={1899},
     journal={Sitzungsber. Lotos (Prag)},
      volume={19},
       pages={311\ndash 319},
}

\bib{Pikhurko2}{article}{
      author={Pikhurko, Oleg},
       title={Lattice points inside lattice polytopes},
        date={2000},
      eprint={arXiv:math/0008028v2},
         url={http://arxiv.org/abs/math/0008028v2},
}

\bib{MR1996360}{article}{
      author={Pikhurko, Oleg},
       title={Lattice points in lattice polytopes},
        date={2001},
        ISSN={0025-5793},
     journal={Mathematika},
      volume={48},
      number={1-2},
       pages={15\ndash 24 (2003)},
         url={http://dx.doi.org/10.1112/S0025579300014339},
      review={\MR{1996360 (2004f:52009)}},
}

\bib{MR0430960}{article}{
      author={Scott, P.~R.},
       title={On convex lattice polygons},
        date={1976},
        ISSN={0004-9727},
     journal={Bull. Austral. Math. Soc.},
      volume={15},
      number={3},
       pages={395\ndash 399},
      review={\MR{0430960 (55 \#3964)}},
}

\bib{MR0097771}{article}{
      author={Stewart, B.~M.},
       title={Asymmetry of a plane convex set with respect to its centroid},
        date={1958},
        ISSN={0030-8730},
     journal={Pacific J. Math.},
      volume={8},
       pages={335\ndash 337},
      review={\MR{0097771 (20 \#4238)}},
}

\bib{MR3300714}{article}{
      author={Taschuk, Steven},
       title={The harmonic mean measure of symmetry for convex bodies},
        date={2015},
        ISSN={1615-715X},
     journal={Adv. Geom.},
      volume={15},
      number={1},
       pages={109\ndash 120},
         url={http://dx.doi.org/10.1515/advgeom-2014-0021},
      review={\MR{3300714}},
}

\bib{MR651251}{article}{
      author={Zaks, J.},
      author={Perles, M.~A.},
      author={Wills, J.~M.},
       title={On lattice polytopes having interior lattice points},
        date={1982},
        ISSN={0013-6018},
     journal={Elem. Math.},
      volume={37},
      number={2},
       pages={44\ndash 46},
      review={\MR{651251 (83d:52012)}},
}

\end{biblist}
\end{bibdiv}

\end{document}